\def\p1i{\pi_1^{\infty}} 
\let\dfr=\rightarrow 
\newtheorem{theorem} {Theorem} 
\newtheorem{definition}{Definition} 
\newtheorem{lemma}{Lemma} 
\newtheorem{proposition}{Proposition} 
\newtheorem{remark}{Remark}
\newcommand{\sm}{\smallsetminus}
\newcommand{\rel}{{\rm Rel}} 
\newcommand{\Z}{\mathbb Z} 
\title{On groups with linear sci growth}
\author{Louis Funar} 
\address{Institut Fourier BP 74, UMR 5582,  Universit\'e Grenoble I, 38402 
Saint-Martin-d'H\`eres Cedex, France } 
\email{louis.funar@ujf-grenoble.fr}
\author{Martha Giannoudovardi} 
\address{Department of Mathematics, 
University of Athens, 157 84 Athens, Greece} 
\email{marthag@math.uoa.gr} 
\author{Daniele Ettore Otera} 
\address{Institute of Mathematics and Informatics, Vilnius 
University, Akademijos str. 4, LT-08663, Vilnius, 
Lithuania} \email{daniele.otera@mii.vu.lt} 
\email{daniele.otera@gmail.com} 
\thanks{The authors acknowledge partial support from  
Research Funding Program Heracleitus II of the  
University of Athens and the European Union (M.G.), 
CMIRA Explora Pro 1200613701 (L.F.) and  
the European Commission's Marie Curie Intra-European Fellowship and INDAM of Italy (D.O.)}
\begin{document} 
 
%\maketitle 
 
\begin{abstract}
We prove that the semistability growth of hyperbolic groups is linear, which 
implies that hyperbolic groups which are sci (simply connected at infinity) 
have linear sci growth. Based on the linearity of the end-depth of finitely presented  
groups we show that the linear sci is preserved under amalgamated  
products over finitely generated one-ended groups.
Eventually one proves that most non-uniform lattices have linear sci.

\vspace{0.1cm} 
\noindent {\bf Keywords:} simple connectivity at
infinity, quasi-isometry,  end-depth, lattices in Lie groups,
amalgamated products. 
 
\vspace{0.1cm} 
\noindent {\bf MSC Subject:} 20 F 32, 57 M 50.
\end{abstract}

\maketitle

 \section{Introduction} 
 
 The metric spaces $(X,d_X)$ and $(Y,d_Y)$ are {\em quasi-isometric} 
 if there are constants $\lambda$, $C$ and maps $f: X \dfr Y$,  
$g: Y \dfr X$ (called $(\lambda ,C)$-quasi-isometries) 
 such that  the following: 
$$d_Y (f(x_1 ),f(x_2 ))\leqslant \lambda d_X (x_1 ,x_2)+C,\; d_X (g(y_1 ),g(y_2 ))\leqslant \lambda d_Y (y_1 ,y_2)+C,$$ 
$$d_X(gf(x),x)\leqslant C,\;  d_Y(fg(y),y)\leqslant C,$$ 
hold for all $x,x_1 ,x_2 \in X, y,y_1 ,y_2  \in Y$.

\begin{definition} 
 A  connected locally compact locally simply connected topological space $X$ with $\pi_1 X=0$ 
 is {\em simply connected at infinity} (abbreviated {\em sci} and one writes 
also $\pi_1^{\infty}X=0$)  
if for each compact $k\subseteq X$ there exists a larger compact 
$k\subseteq K\subseteq X$ such that any closed loop in $X \sm K$ is  
null homotopic in $X\sm k$. 
\end{definition} 
 
\noindent The sci is a fundamental tameness condition for non-compact spaces,  
as it singles out Euclidean spaces among contractible manifolds, following  
classical results of Stallings and Siebenmann. The concept of sci of finitely presented groups  goes back at least to  
Siebenmann's thesis (\cite{Sie}) and seems to first appear in its actual form in Houghton's paper \cite{H}  
(see also \cite{Mi1,Br1}), as follows: 
 
\begin{definition} 
A finitely presented  
group $G$ is simply connected at infinity  (abbreviated sci)  
if for some (equivalently any)  finite complex $X_G$ 
with $\pi_1 X_G=G$, its universal covering $\widetilde{X_G}$ is sci.  
\end{definition} 
 
The group $\Z^2$ is obviously not sci.  More interestingly, M. Davis (see e.g. \cite{DM2}) constructed  word hyperbolic  
groups $G$ (of virtual cohomological dimension $n\geq 4$  
by the results of Bestvina and Mess from \cite{bm})  
which are not sci.

All groups considered in the sequel will be finitely 
presented (unless the opposite is explicitly stated) and  a 
system of generators 
determines a word metric on the group. Although this depends on the 
chosen  generating set, 
the different word metrics are quasi-isometric. 
In \cite{FO} we enhanced the topological sci notion in the case of groups  
by taking advantage of this metric structure.

\begin{definition} 
Let $X$  be a sci non-compact metric space.  
The {\em sci growth} $V_X(r)$ (called rate of vanishing of $\p1i$ in \cite{FO})  
is the infimal $N(r)$ with the property that any loop in the complement   
of the metric ball $B(N(r))$ of radius $N(r)$ (centered at the identity) 
bounds a 2-disk outside $B(r)$. 
\end{definition} 
\begin{remark} 
It is easy to construct examples of metric spaces  
with arbitrarily large $V_X$. 
\end{remark} 
It is customary to introduce 
the following (rough) equivalence relation on real valued functions: 
the real functions $f$ and $g$ are equivalent, denoted by  $f\sim g$,  if 
there exist constants $c_i, C_j$ for $i=1,2,3$ (with $c_1,c_2 >0$) such that: 
\[ c_1 f(c_2 x)+ c_3 \leq g(x) \leq C_1 f(C_2 x) + C_3, \; {\rm for \; all\; } 
x. 
\] 
It is proved in \cite{FO} that 
the (rough)  equivalence class of $V_X(r)$ is a quasi-isometry invariant. 
In particular, if a finitely presented group $G$ is sci, then  
the (rough) equivalence class of the real function   
$V_G=V_{\widetilde{X_G}}$ is a quasi-isometry invariant of $G$, where 
${\widetilde{X_G}}$ is  the universal covering space 
of any finite complex $X_G$, with $\pi_1(X_G)=G$.

If a finitely presented group $G$ is sci and $V_G$ is a linear function we will say that $G$  
has {\em linear sci}. In contrast with the abundance of  
equivalence classes of geometric invariants of finitely presented groups  
(like group growth, Dehn functions or isodiametric functions), the metric  
refinements  
of topological properties seem highly constrained. We already found  
in \cite{FO}  
that many cocompact lattices in Lie groups and in particular  
geometric 3-manifold groups have linear sci.  
The aim of this paper is to further explore this phenomenon by considerably  
enlarging the class of groups with linear sci.  
Our first result is:   
 
\begin{theorem}\label{hyperbolicsci} 
Word hyperbolic groups which are sci have linear sci.
\end{theorem}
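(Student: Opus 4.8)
The plan is to deduce the theorem from a quantitative form of the (well-known) semistability at infinity of hyperbolic groups. Recall that a sci group is one-ended, so I fix a one-ended word hyperbolic group $G$ and work in $\widetilde{X_G}$, taken to be either the universal cover of a presentation $2$-complex or a Rips complex $P_d(G)$ for $d\gg 0$: this is a simply connected, geodesic, $\delta$-hyperbolic complex carrying a geometric $G$-action and satisfying a linear isoperimetric inequality. Let $o$ denote the identity vertex and $B(t)$ the metric ball of radius $t$ about $o$. I define the \emph{semistability growth} $W_G(r)$ to be the least $N$ such that the image of $\pi_1\bigl(\widetilde{X_G}\sm B(N')\bigr)$ in $\pi_1\bigl(\widetilde{X_G}\sm B(r)\bigr)$ is independent of $N'$ for all $N'\geq N$, i.e. the least level at which the Mittag--Leffler (stable) image at level $r$ is attained.

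The first step is the elementary implication that, \emph{for a sci group}, $V_G(r)\leq W_G(r)$. Indeed, sci provides for each $r$ some level $N_1$ killing $\pi_1(\widetilde{X_G}\sm B(N_1))\dfr\pi_1(\widetilde{X_G}\sm B(r))$; setting $N_2=\max(N_1,W_G(r))$ and using that these images only shrink as the outer radius grows, the image at level $N_2$ is simultaneously trivial (it sits inside the image at level $N_1$) and equal to the stable image (as $N_2\geq W_G(r)$). Hence the stable image at level $r$ is trivial and is already realised at level $W_G(r)$, so every loop outside $B(W_G(r))$ bounds a disk outside $B(r)$. Thus linearity of $W_G$ forces $V_G$ to be linear.

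The heart of the argument is therefore to prove $W_G(r)\leq r+C(\delta)$, an assertion which uses no sci hypothesis and holds for every one-ended hyperbolic group. The mechanism is to push loops radially \emph{outward} rather than to contract them, so that the length of the loop never enters. Given a loop $\gamma$ outside $B(N)$, I would first homotope it inside $\widetilde{X_G}\sm B(N)$ to an edge-loop whose successive vertices $p_i$ satisfy $d(p_i,p_{i+1})=O(1)$, joined by geodesics. I then push $\gamma$ outward one unit of radius at a time: each vertex is moved a distance $1$ along a chosen geodesic ray $[o,p_i]$, and the displaced vertices are reconnected by geodesics. The ``gap'' between two consecutive rays over a single unit of radius is a quadrilateral whose sides are bounded ($O(1)+O(\delta)$ by thin triangles), so by the linear isoperimetric inequality it is filled by a disk of diameter $O(\delta)$; this filling dips at most $O(\delta)$ below the current radius. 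Iterating the sweep to arbitrarily large radius keeps $\gamma$ outside $B(r)$ once $N\geq r+C(\delta)$, and realises the image at level $r$ as factoring through every outer level, which is exactly the stable image. This yields $W_G(r)\leq r+C(\delta)$, and combined with the first step, $V_G(r)\leq r+C(\delta)$, so $G$ has linear sci.

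The main obstacle I expect is the uniform control of the outward sweep. One must verify that the per-step gap fillings are bounded by a constant independent of $N'$ and of the length of $\gamma$, and, crucially, that the inward excursions of the homotopy do not accumulate. Here hyperbolicity is used in the favourable direction: since each step extends the rays by only one unit, two rays issuing from $O(1)$-close vertices diverge by merely $O(\delta)$ over that step, so every gap loop is uniformly short regardless of how far $\gamma$ wanders; and because the $k$-th step takes place near radius $N+k$, its filling dips no lower than $N-C(\delta)$, so the deepest excursion is bounded below by $N-C(\delta)$ with no accumulation. Organising this coarse sweep as a genuine cellular homotopy, and checking that the reconnecting geodesics can be chosen consistently, is the technical core; once it is in place the linear bound on $W_G$, and hence on $V_G$, follows.
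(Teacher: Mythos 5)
Your reduction step is sound and is in fact the paper's own first move: the paper introduces a semistability growth $S_G$, proves $V_G=S_G$ for sci groups (Proposition \ref{ineq}), and deduces Theorem \ref{hyperbolicsci} from linearity of $S_G$ for hyperbolic groups (Theorem \ref{hyperbolic}). Your Mittag--Leffler variant $W_G$ and the inequality $V_G\le W_G$ play exactly the same role. The fatal problem is in the second step, the ``outward sweep''.

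The uniform control you claim for the sweep is false, and the failure is exactly where the deep input of the paper enters. First, a minor point: your rays through the vertices $p_i$ already require the Bestvina--Mess result (Proposition \ref{grays}), since a geodesic segment $[o,p_i]$ need not extend to a ray. The decisive point is this: two rays issuing from $o$ that pass within $O(1)$ of each other at radius $N$ typically converge to distinct boundary points, so $d\bigl(\rho_i(N+k),\rho_{i+1}(N+k)\bigr)$ grows like $2k$; the per-step increment of the gap is indeed at most $2+O(\delta)$, but it accumulates, so after $k$ steps your gap quadrilaterals have perimeter of order $k$, not $O(1)+O(\delta)$. Worse, the geodesics you use to reconnect the displaced vertices do not stay far out: the geodesic joining $\rho_i(N+k)$ to $\rho_{i+1}(N+k)$ comes within roughly the Gromov product $\bigl(\rho_i(N+k)\mid\rho_{i+1}(N+k)\bigr)_o$ of the identity, and this product is about $N$, not $N+k$, once the rays separate just beyond radius $N$ (picture two rays in a tree branching at radius $N$: the geodesic between their points at radius $N+k$ passes through the branch point for every $k$). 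Hence your level-$k$ loop lies only outside $B(N-O(1))$, never outside $B(N+k-O(\delta))$: the sweep does not move the loop toward infinity at all, and nothing is proved about the stable image. To repair this one needs connecting paths that stay outside $B(N+k-c)$ \emph{and} have uniformly bounded length, so that fresh rays can be inserted at every level (re-subdivision, as in the paper's Lemma \ref{cll1}) and the gaps stay bounded by a constant. That is precisely the property ${\rm Rel}(M)$, and it is \emph{not} a consequence of $\delta$-hyperbolicity plus a linear isoperimetric inequality: by Proposition \ref{bmCP} its failure produces a global cut point in $\partial G$, so its validity for one-ended hyperbolic groups rests on the cut-point theorem of Bowditch, Swenson and Swarup. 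Your claim that $W_G(r)\le r+C(\delta)$ holds for every one-ended hyperbolic group by thin-triangle arguments alone should have been a red flag: it would give an elementary proof of semistability of hyperbolic groups, which historically became a theorem only through that cut-point theorem, and indeed the paper's constants depend on the Bestvina--Mess constant $c$ and the ${\rm Rel}(M)$ constant $L$, not on $\delta$ alone.
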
 
 
 Let us recall that a group $G$   
is {\em one-ended} (or $0$-connected at 
infinity) if for any compact subset $L$ of the Cayley graph $X_G$ 
of $G$,  there exists a compact subset $K\supset L$ such that any 
two points out of $K$ can be joined by a path contained in $X_G 
\sm L$.

 If $H$ is a subgroup of two groups 
$G_1$  and $G_2$, the amalgamated product
$G_1 \ast_H G_2$ is the quotient of the free product of $G_1$ and $G_2$, where the copies of
$H$ in $G_1$ and $G_2$ are identified. If $H$ and $K$ are isomorphic subgroups of
$G_1$, the HNN-extension $G_1\ast_H$ is the quotient of
the free product $G_1\ast \langle t\rangle$,  where $H$ is identified with $t^{-1}Kt$ and
$\langle t \rangle$ denotes the free cyclic group generated by $t$. 
 
 We next show that the class of groups with linear sci 
is closed under amalgamated free products known to preserve the sci.

\begin{theorem}\label{amalgam} 
\begin{enumerate} 
\item Let $G_1$ and $G_2$  be one-ended finitely presented groups with linear sci and  $H$
be a finitely generated subgroup of $G_1$ and $G_2$ with one end. Then 
the amalgamated free product  $G=G_1\ast _H G_2$ has linear sci. 
\item  
Let $G_1$ be a finitely presented group with linear 
sci and  $H, K$ be isomorphic finitely generated subgroups with one end.   
Then the HNN-extension $G= G_1\ast_{H}$ has linear sci. 
\end{enumerate} 
\end{theorem}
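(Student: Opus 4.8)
The plan is to exploit the Bass--Serre tree $T$ of $G$ and the induced tree-of-spaces structure on the universal cover. Realizing $X_G$ as the double mapping cylinder of the two maps $X_H\to X_{G_i}$ carrying the inclusions $H\hookrightarrow G_i$, the universal cover $\widetilde{X_G}$ fibers over $T$ with vertex spaces isometric to copies of $\widetilde{X_{G_1}},\widetilde{X_{G_2}}$ and edge spaces isometric to copies of $\widetilde{X_H}$. Since the amalgam is already assumed to be sci, the only task is to bound $V_G$ by a linear function. Two standard features of this structure drive the estimates: each vertex space is quasi-isometrically embedded in $\widetilde{X_G}$, so that being deep in a vertex space and being deep in $\widetilde{X_G}$ agree up to a multiplicative constant; and the ambient distance of a point to the basepoint dominates the $T$-distance of its supporting vertex, because any path reaching a space over $v$ must traverse every edge cylinder along the $T$-geodesic to $v$.

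First I would upgrade the end-depth of $G$ to a linear function, which is the zero-dimensional core of the argument and the place where one-endedness of $H$ enters. Given two points of $\widetilde{X_G}$ outside $B(N)$, I would connect them by a path avoiding $B(cN)$ by induction on the $T$-distance between their supporting vertices: inside each vertex space the linear end-depth of the one-ended group $G_i$ moves a point out to the incident edge space while staying far from the basepoint, and across an edge the one-endedness (linear end-depth) of $H$ slides the point within that edge space. The resulting far-out path is a concatenation of arcs, each lying in a single vertex space. With this in hand I would prove the linear bound on $V_G$ by a parallel induction on the number of edges of the finite subtree $p(\gamma)\subset T$ spanned by a null-homotopic loop $\gamma$ lying outside $B(N)$, with $N$ linear in $r$. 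Choosing an outermost edge $e$, the portion $\delta$ of $\gamma$ beyond $e$ is a single arc in one vertex space $V\cong\widetilde{X_{G_i}}$ with endpoints $a,b$ on the edge space $E\cong\widetilde{X_H}$ over $e$; joining $a$ to $b$ by a far-out arc $\varepsilon\subset E$ supplied by the linear end-depth of $H$, the loop $\delta\varepsilon^{-1}$ lies in $V$ outside a ball of radius proportional to $r$ and is filled by a disk of $V$ avoiding $B(r)$ through the linear sci of $G_i$. Replacing $\delta$ by $\varepsilon$ lowers the tree-complexity without meeting $B(r)$, and the induction closes; the HNN case is identical with a single vertex group, the edges of $T$ being produced by the stable letter.

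The main obstacle is the uniform metric bookkeeping through the edge spaces. A finitely generated one-ended $H$ may be badly distorted inside $G_i$, so being deep in the intrinsic metric of an edge space need not translate into being deep in $\widetilde{X_G}$; controlling the connecting arcs $\varepsilon$ and keeping every capping disk outside $B(r)$ is therefore the crux, and it is exactly what the \emph{linearity} of the end-depth is designed to secure. Concretely, the easy Lipschitz direction already forces the endpoints $a,b$ to be far in the intrinsic metric of $E$, so the linear end-depth of $H$ applies on the correct scale; one then keeps $\varepsilon$ ambient-far by treating outermost edges first, where the $T$-distance lower bound on ambient distance guarantees that the relevant edge spaces sit outside $B(r)$ outright, and by propagating this control inward through the quasi-isometrically embedded vertex spaces. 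The decisive point is that the cost of each reduction is merely the composition of the end-depth slope of $H$ with the sci-growth slopes of $G_1,G_2$, and that this cost is \emph{not} compounded over successive reductions, since every capping homotopy is supported in a single vertex space and leaves the rest of the curve untouched. Assembling these estimates yields $V_G(r)\le\lambda r$ with $\lambda$ depending only on the end-depth of $H$ and the sci-growth of $G_1,G_2$, which is the desired linear sci.
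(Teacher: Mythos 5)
Your overall scheme is the one the paper uses: induct on the tree-complexity of the loop (the paper runs the induction through the normal form for amalgams and Britton's lemma rather than the Bass--Serre tree, but it is the same induction), cap each arc whose syllable lies in $H$ by an arc in the corresponding copy of the Cayley graph $C_H$ supplied by end-depth, fill the resulting loop inside a single vertex space by the linear sci of $G_1$ or $G_2$, and observe that the constants do not compound (in the paper this is because the capping arcs always join \emph{original} vertices of the loop, so only two scales $c_1r$ and $cr$ ever occur). However, your proposal has a genuine gap: you repeatedly invoke ``the linear end-depth of $H$'' (and of the $G_i$) as though one-endedness automatically provides it. It does not. A priori the end-depth of a one-ended group is an arbitrary function (compare the dead-end phenomena of Cleary--Riley cited in the paper), and the statement that every finitely generated one-ended group has end-depth $V_0(r)\leq 2r$ is precisely Proposition \ref{enddepth} of the paper --- one of its advertised results, proved there by a separate argument (through every vertex of a homogeneous, locally finite, one-ended graph passes a bi-infinite discrete geodesic, whence every bounded component of $X_G\sm B(r)$ lies in $B(2r)$). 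Since $H$ is only assumed finitely generated with one end, your capping arcs $\varepsilon$ have no justification without this lemma; a blind proof must supply it, and yours does not.

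Moreover, the two ``standard features'' on which you base the metric bookkeeping are false. Vertex spaces need not be quasi-isometrically embedded in $\widetilde{X_G}$: take $G_1=\mathbb Z^3$, $G_2=BS(1,2)\times\mathbb Z$, and $H=\mathbb Z^2$ generated by the distorted generator $a$ of $BS(1,2)$ together with the central factor. All hypotheses of the theorem hold (the linear sci of $G_2$ follows from the exact-sequence remark in the last section of the paper), yet $d_G(1,a^{2^n})\leq 2n+1$ while $d_{G_1}(1,a^{2^n})=2^n$, so the vertex space carrying $G_1$ is exponentially distorted in $\widetilde{X_G}$. Likewise, an outermost edge of the subtree spanned by $\gamma$ can be adjacent to the base vertex --- for a loop encircling the basepoint the spanned subtree is typically a star around it --- and the edge space over such an edge is an entire unbounded copy of $C_H$ passing within bounded distance of the basepoint; so these edge spaces emphatically do not ``sit outside $B(r)$ outright'', and your mechanism for keeping $\varepsilon$ and the capping disks away from $B(r)$ collapses exactly in the main case. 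The paper needs neither claim: it applies the \emph{intrinsic} linear sci of $\widetilde X_1,\widetilde X_2$ and the \emph{intrinsic} linear end-depth of $H$ inside each of the finitely many copies meeting the relevant ambient ball, and quantifies the outcome by ambient balls $B(r)\subset B(cr)\subset B(c_1r)$ directly. You are right that the ambient-versus-intrinsic comparison is the crux of the argument, but the resolution you propose rests on two statements that are wrong, so the proof does not close as written.
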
 
 
Theorem \ref{amalgam}  
is similar to (but under stronger restrictions than)  
the results obtained by Mihalik and Tschantz  
in \cite{MiTsch,MiTsch3} in the context of semistability.  
Notice that the sci is {\em not} preserved under amalgamated products over  
multi-ended subgroups (see \cite{Ja2}).

Previous results of \cite{FO} dealt with all cocompact lattices  
in connected Lie groups but the solvable ones.  
We now consider non-uniform  
lattices.  Our main result in this direction is:   
 
\begin{theorem}\label{hrank} 
\begin{enumerate} 
\item Let $G$ be a semisimple Lie group for which the associated  
symmetric space $G/K$ is of dimension $n\geq 4$ and of 
$\mathbb R$-rank greater than or equal to  $2$. Let $\Gamma$ be an 
irreducible, non-uniform lattice in  $G$ of $\mathbb Q$-rank one. 
Then $\Gamma$ is sci with linear sci. 
\item Every lattice $\Gamma \subset SO(n,1)$, $n\geq 2$  has linear sci. 
\end{enumerate} 
\end{theorem}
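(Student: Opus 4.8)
The plan is to realise $\Gamma$, up to quasi-isometry, as a neutered symmetric space, and to transport the linear sci of the ambient symmetric space across the cusps. For part (1) I would first invoke the classical reduction theory for lattices of $\mathbb Q$-rank one (Borel--Serre, Garland--Raghunathan) to fix a $\Gamma$-invariant family of pairwise disjoint open horoballs $\{B_i\}$, centred at the $\mathbb Q$-rational parabolic fixed points, whose complement $X_0=(G/K)\sm\bigcup_i B_i$ is a contractible manifold with boundary $\partial X_0=\bigsqcup_i\partial B_i$ on which $\Gamma$ acts properly discontinuously and cocompactly. Then $\Gamma$ is quasi-isometric to $X_0$, so it suffices to show that $X_0$ is sci with linear sci growth. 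Since the symmetric space $X=G/K\cong\mathbb R^n$ is quasi-isometric to a uniform lattice in $G$, it has linear sci for $n\ge 3$ by \cite{FO}, and the entire problem is reduced to controlling the effect of excising the horoballs.

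Next I would record that each horosphere $\partial B_i$, being the boundary of a convex horoball in a Hadamard manifold, is diffeomorphic to $\mathbb R^{n-1}$ and hence simply connected once $n\ge 3$; from the long exact sequence of the pair and the contractibility of $B_i$ one gets $\pi_2(B_i,\partial B_i)\cong\pi_1(\partial B_i)=0$, so that any singular $2$-disk meeting a horoball can be homotoped rel boundary off that horoball. The sci of $X_0$ then follows formally: a loop of $X_0$ lying outside a large ball is filled by a disk in $X$ using the sci of $X$, and its finitely many horoball intersections are removed by the push-off, producing a filling disk inside $X_0$. Here the $\mathbb Q$-rank one hypothesis is exactly what makes the horoballs disjoint and each cusp isolated, so that the successive push-offs do not interfere, while $n\ge 4$ and $\mathbb R$-rank $\ge 2$ guarantee both that $X$ is sci and that the horospherical cross-sections (infranilmanifold bundles over the Levi locally symmetric space) are one-ended with controlled geometry.

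The heart of the matter, and the step I expect to be the main obstacle, is the linear metric control of these two operations near the cusps. The push-off across a horoball must be performed so that the radius of the filling drops by at most a bounded multiple of the radius of the original loop; since horospheres are exponentially distorted in the ambient metric, a loop lying deep inside a cusp could a priori be filled only by retreating far towards the compact core, which would destroy linearity. I would control this by coupling the linear sci growth of $X$ with the linearity of the \emph{end-depth} of the cusp subgroups $\Gamma_i$: being lattices in $\mathbb Q$-rank one parabolic subgroups, these are one-ended finitely presented groups with linear end-depth, and this linearity bounds the radial loss incurred when a filling disk runs into a cusp. Summing the local estimates over the finitely many $\Gamma$-orbits of cusps then yields a global bound $V_\Gamma(r)\preceq r$.

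For part (2) the same scheme applies to $X=\mathbb H^n$, where the horospheres are honest flat copies of $\mathbb R^{n-1}$ and the cusp subgroups are Bieberbach groups, virtually $\mathbb Z^{n-1}$, which for $n\ge 3$ are one-ended with linear end-depth; combined with the linear sci of $\mathbb H^n$, the push-off argument gives linear sci for every non-uniform lattice (for $n=2$ the cusp groups are infinite cyclic and $\Gamma$ is virtually free, hence sci with linear growth for trivial reasons). Finally, the uniform lattices in $SO(n,1)$ with $n\ge 3$ are word hyperbolic and sci, their model space $\mathbb H^n\cong\mathbb R^n$ being sci, so Theorem \ref{hyperbolicsci} immediately provides linear sci, completing the proof.
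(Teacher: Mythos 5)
You have correctly identified the crux of the problem --- the exponential distortion of horospheres, which makes ``fill in the ambient space and push off the horoballs'' lose control of the radius --- but the tool you propose to resolve it does not do the job, and this is a genuine gap rather than a technicality. End-depth is a $0$-dimensional invariant: linear end-depth of a cusp group $\Gamma_i$ only says that two \emph{points} far outside a ball can be joined by a \emph{path} avoiding a proportionally smaller ball. It gives no control over a $2$-dimensional filling, so it cannot bound ``the radial loss incurred when a filling disk runs into a cusp.'' Concretely: a loop lying on a horosphere at path-distance $R$ from the basepoint sits at ambient distance roughly $\log R$; filling it in $X$ by the linear sci of $X$ and pushing off the horoballs via $\pi_2(H,\partial H)=0$ yields a disk avoiding only a ball of radius comparable to $\log R$, i.e.\ an exponential, not linear, bound on $V(r)$, and no statement about joining points in $\Gamma_i$ improves this. (In the paper's Theorem \ref{amalgam} end-depth suffices precisely because the $2$-dimensional work is done by the sci of the vertex groups; here the analogous $2$-dimensional work inside a cusp would have to be done by the cusp group itself, and in part (1) the cusp groups are virtually nilpotent, whose sci growth is exactly the open problem acknowledged in the paper's remark about strongly polycyclic groups. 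So delegating the filling to the cusps is not merely unproved --- it runs into an open question.)

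The paper's actual proof supplies, in each part, exactly the ingredient your sketch is missing. In part (1) it invokes Lubotzky--Mozes--Raghunathan: in $\mathbb R$-rank $\geq 2$ the word metric on $\Gamma$ is undistorted in $G$, so $\Gamma$ is quasi-isometric to the neutered space $X_0$ equipped with the \emph{restriction} $d_X|_{X_0}$ of the ambient metric rather than with the path metric; this is what makes the distortion problem vanish, and it has nothing to do with the cusp groups. With that metric, balls of $X_0$ are ambient balls intersected with $X_0$, and a convexity lemma (the distance function from $c\notin H$ restricted to a horoball $H$ has a unique critical point, so a metric sphere meets $H$ in a ball) shows that metric spheres of $X_0$ are $(n-1)$-spheres with finitely many disjoint open disks removed, hence simply connected for $n\geq 4$; this gives $V(r)=r$. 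Note this is where $n\geq 4$ enters: your condition $n\geq 3$, needed only for simple connectivity of horospheres, is not sufficient, since an $(n-1)$-sphere minus disks is simply connected only when $n-1\geq 3$. In part (2), where LMR is unavailable and the path metric genuinely is exponentially distorted, the paper instead uses that truncated hyperbolic space with its path metric is CAT(0) (Bridson--Haefliger, Ruane), so its metric balls are topological balls, and then an explicit description of path-metric geodesics (a geodesic to a point hidden behind a horoball runs along the horosphere and then along a tangent hyperbolic segment) to show that path-metric spheres are again spheres minus finitely many disjoint ``visible'' disks. Your outline, as written, establishes the topological sci of $X_0$ but not the linearity of $V$, which is the entire content of the theorem.
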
 
 
\begin{remark} 
We believe that the last result also holds for $\mathbb R$-rank $1$ semisimple Lie 
groups and for non-uniform lattices of $\mathbb Q$-rank $>1$. 
Further, if moreover strongly polycyclic groups had 
linear sci then all lattices in Lie groups (of sufficiently large dimension) would have linear sci. 
\end{remark} 
 
The results of this paper naturally lead to the  
question of the existence of sci groups with super-linear sci growth.  
It seems still unknown whether  
sci CAT(0) groups have linear sci.

\vspace{0.2cm} 
 
\noindent {\bf Acknowledgements}: 
The authors are indebted to  Y. de Cornulier, F. Haglund, P.  
Papazoglou, V. Poenaru, M. Sapir and A. Valette  
for useful discussions and advice and the referee for corrections of both mathematical and historical nature.

\section{Proof of Theorem  \ref{hyperbolicsci}}
\subsection{Preliminaries on hyperbolic groups} 
 
Let $(X,d)$ be a geodesic metric space, which in our case will be  
the Cayley graph of a finitely generated group $G$ endowed with the word metric 
induced by a finite generating system. 
Let $\gamma$ be a geodesic path in $X$, possibly infinite. For  
any $x$, $y\in\gamma$, we  denote by  
$[x,y]_\gamma$ the sub-path of $\gamma$ that connects $x$ to $y$. When $\gamma$ 
is finite, we denote by $\ell(\gamma)$ the length of the path $\gamma$. 

A geodesic triangle in $X$ is $\delta$-{\em slim}  
if every side is contained in the $\delta$-neighbourhood of the  
union of its other sides. The group $G$ is $\delta$-\textit{hyperbolic}, for 
some $\delta\geqslant 0$,  
if all geodesic  
triangles in $X$ are $\delta$-slim. The group $G$ is \textit{hyperbolic} if it 
is  
$\delta$-hyperbolic for some  
$\delta\geqslant0$. It is well known that hyperbolic groups are finitely presented.

Suppose from now on that $G$ is a $\delta$-hyperbolic group and  
$X_G$ its Cayley complex associated  
with a finite presentation  $\mathcal{P}=\langle  
S\mid R\rangle$. We will only consider geodesics within the  
Cayley graph, namely the 1-skeleton $X_G^{(1)}$ of $X_G$. Notice that  
while the Cayley complex may change when adding words equal to  
the identity to the relators in $\mathcal{P}$, the Cayley graph remains  
unchanged.  
  
Bestvina and Mess \cite{bm} proved the following crucial fact, which was 
pointed out by Mihalik: 
\begin{proposition}[\cite{bm}]\label{grays} 
Let $G$ be a hyperbolic one-ended group. There is a constant 
$c\geqslant 0$ so that for all 
$x\in X_G$ there exists an infinite geodesic ray issuing from the identity of $G$ 
which passes within $c$ of $x$.   
\end{proposition}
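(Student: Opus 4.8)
The plan is to package the statement as an Arzel\`a--Ascoli limit of geodesic segments and to reduce the whole problem to one quantitative input about one-ended hyperbolic groups. Fix $x\in X_G$ and set $n=d(e,x)$. First I would observe that it suffices to produce, for a constant $c$ depending only on $\delta$ and on the one-endedness of $G$, a sequence of vertices $y_j$ with $d(e,y_j)\dfr\infty$ such that some geodesic segment $[e,y_j]$ passes within $c$ of $x$. Indeed, the point $p_j\in[e,y_j]$ with $d(p_j,x)\le c$ then lies in the ball $B(n+c)$; since $X_G$ is locally finite, after passing to a subsequence the segments $[e,y_j]$ agree on $B(n+c)$ and converge, uniformly on compacta, to an infinite geodesic ray $\gamma$ issuing from $e$. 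As each $p_j$ lies on the common initial portion, $\gamma$ still passes within $c$ of $x$, which is exactly the conclusion.

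The heart of the matter is therefore to find such far-away $y_j$ whose geodesics to $e$ stay near $x$, and here both hypotheses enter. I would extract this from the statement that one-ended hyperbolic groups have uniformly bounded \emph{dead-end depth}: there is $D=D(\delta,G)$ so that every vertex $v$ admits a vertex $w$ with $d(e,w)\ge d(e,v)+1$ and $d(v,w)\le D$. Granting this, one builds a ``radial'' path $x=x_0,x_1,x_2,\dots$ with $d(x_k,x_{k+1})\le D$ and $d(e,x_{k+1})\ge d(e,x_k)+1$. The net outward progress of at least one per step forces $d(x_i,x_j)\ge |i-j|$, while the bounded steps force $d(x_i,x_j)\le D|i-j|$; hence the concatenation of $[e,x]$ with this path is a quasigeodesic ray issuing from $e$. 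By stability of quasigeodesics in the $\delta$-hyperbolic graph $X_G$, it lies within a Hausdorff distance $H=H(\delta,D)$ of a genuine geodesic ray from $e$, which then passes within $c:=H$ of $x=x_0$. Taking $y_j:=x_j$ supplies the sequence required in the first paragraph.

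The main obstacle is the uniform bound on dead-end depth, and this is precisely where one-endedness is indispensable: by one-endedness the complement $X_G\sm B(m-1)$ of each ball has a single unbounded component, so from any vertex $v$ with $d(e,v)=m$ one can travel to arbitrarily large radius while staying at distance $\ge m$ from $e$, giving a well-defined outward direction. The content is that this escape can be realised after a \emph{bounded} lateral excursion within the sphere $\{d(e,\cdot)=m\}$, rather than after first retreating toward $e$; were the required excursion arbitrarily long, this sphere would contain arbitrarily long thin necks, which the slim-triangle condition rules out by comparing the excursion against the two radial geodesics that bound it. Controlling this bound uniformly in $m$, from $\delta$ and the one-ended connectivity constant alone, is the crux of the argument; once it is in hand, the quasigeodesic-stability step and the Arzel\`a--Ascoli packaging are routine.
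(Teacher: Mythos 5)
Your first two paragraphs are fine: granting a uniform bound $D$ on dead-end depth, the outward path you build is indeed a $(D,0)$-quasigeodesic ray, Morse stability puts each geodesic segment $[e,x_j]$ within $H(\delta,D)$ of it (hence within $H$ of $x$), and the Arzel\`a--Ascoli extraction is routine. The genuine gap is exactly where you place ``the crux'': the uniform dead-end bound is never proved, and the sketch you offer for it does not work. One-endedness gives a single \emph{unbounded} component of $X_G\sm B(m-1)$; it does not put your vertex $v$ with $d(e,v)=m$ in that component. A vertex lying in a bounded component of $X_G\sm B(m-1)$ is precisely what a dead end is, so your sentence ``from any vertex $v$ one can travel to arbitrarily large radius while staying at distance $\geq m$ from $e$'' assumes the conclusion. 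The follow-up -- that any needed excursion stays in the sphere $\{d(e,\cdot)=m\}$ and that ``long thin necks'' are excluded by slim triangles -- is an assertion, not an argument: slimness constrains geodesics, while an escape route from a dead end need be neither geodesic nor confined to a sphere. Note moreover that bounded dead-end depth is essentially \emph{equivalent} to the proposition (conversely, given the proposition, follow the ray passing within $c$ of $v$ outward by $2c+1$ to gain depth with a step of length $\leq 3c+1$), so your reduction trades the statement for an equivalent one rather than for something more tractable.

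For comparison: the paper offers no proof at all -- it cites Bestvina--Mess \cite{bm} -- so the relevant benchmark is the standard direct argument, which bypasses dead ends entirely. Since $G$ is infinite and hyperbolic, $X_G$ contains a bi-infinite geodesic; in fact the paper's own Lemma \ref{infgeod} produces, through any prescribed vertex, an isometrically embedded copy of $\Z$, using only homogeneity, local finiteness and a diagonal argument. Alternatively, translate one fixed bi-infinite geodesic $\ell$ by $xp_0^{-1}$ (where $p_0$ is a vertex of $\ell$) to get a bi-infinite geodesic through the given vertex $x$, with ideal endpoints $\xi_{\pm}$. The ideal triangle with vertices $e,\xi_+,\xi_-$ is $k\delta$-slim for a universal $k$ (a standard fact in proper $\delta$-hyperbolic spaces, obtained from slimness of finite triangles by a limiting argument), so $x$ lies within $c=k\delta$ of one of the two rays $[e,\xi_+)$ or $[e,\xi_-)$ issuing from the identity. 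This proves the proposition with $c$ depending only on $\delta$, and -- unlike your route -- it never uses one-endedness, which in Proposition \ref{grays} is an artifact of the context in \cite{bm} rather than a necessary hypothesis.
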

We say that two geodesic rays are \textit{asymptotic} if  
their images in $X_G$ are at a finite Hausdorff distance apart.  
This defines an equivalence relation on the collection of geodesic rays in $X_G$. 
 The \textit{boundary} $\partial X_G$  of $X_G$ is the collection  
of equivalence classes,  under this relation, of geodesic rays in $X_G$.  
Unless  otherwise stated, all geodesics considered will be assumed to be unit 
speed geodesics.  

We say that $X_G$ satisfies $\rel(M)$ for some $M>0$ if there exists $L>0$ such 
that 
for all $R>0$ and $x,y\in X_G$ with $d(x,1)=d(y,1)=R$ and $d(x,y)\leq M$, there 
exists a path of length at most $L$ that connects $x$ and $y$ outside the 
ball $B(R-c)$, where $c$ is the constant of Proposition \ref{grays}. It will be 
convenient to say that then $X_G$ satisfies $\rel(M)$ with constant $L$. The 
significance of $\rel(M)$ is revealed in the following:
\begin{proposition}[\cite{bm}]\label{bmCP}
Let $G$ be a hyperbolic one-ended group. If $\rel(M)$ fails for some $M>0$, 
then 
$\partial G$ contains a global cut point.
\end{proposition}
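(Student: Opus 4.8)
The plan is to argue directly from the failure of $\rel(M)$ and to produce the cut point as a limit of the \emph{centers} of the obstructing configurations. First I would unwind the negation of $\rel(M)$: for every $L>0$ there are a radius $R$ and vertices $x,y$ with $d(x,1)=d(y,1)=R$ and $d(x,y)\le M$ that cannot be joined by a path of length $\le L$ inside $X_G\sm B(R-c)$. Choosing $L=L_n\to\infty$ yields radii $R_n$ and vertex pairs $x_n,y_n$ on the sphere of radius $R_n$ with $d(x_n,y_n)\le M$ for which no path of length $\le L_n$ joins them outside $B(R_n-c)$. A preliminary observation is that $R_n\to\infty$: if the $R_n$ stayed bounded, then by local finiteness of $X_G$ only finitely many such pairs occur, and since by Proposition \ref{grays} and one-endedness each of $x_n,y_n$ lies on an infinite geodesic ray leaving $B(R_n-c)$ outward, each pair is joinable in the outer region by a path of uniformly bounded length, contradicting $L_n\to\infty$. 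One may also assume $M>c$, since otherwise the geodesic $[x_n,y_n]_{\gamma}$ itself stays outside $B(R_n-c)$ and $\rel(M)$ holds trivially.

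The central device is to translate the obstruction to the basepoint. Since $G$ acts simply transitively on the vertices of $X_G$, apply $x_n^{-1}$: the point $x_n$ is carried to $1$, the point $y_n$ to $y_n':=x_n^{-1}y_n$ with $d(1,y_n')\le M$, while the excluded ball becomes $B(x_n^{-1},R_n-c)$, centered at $x_n^{-1}$ with $d(1,x_n^{-1})=R_n\to\infty$. In these coordinates the obstruction reads: $1$ and $y_n'$ cannot be joined by a path of length $\le L_n$ outside $B(x_n^{-1},R_n-c)$. Passing to a subsequence, I would arrange that $y_n'=y'$ is constant (local finiteness, as $y_n'$ stays in the fixed ball $B(M)$), that $x_n^{-1}\to\omega\in\partial X_G$ (compactness of $\partial X_G$), and that the balls $B(x_n^{-1},R_n-c)$ converge, in the sense of the Busemann/horofunction compactification, to a horoball centered at $\omega$; here properness of $X_G$ and $\delta$-hyperbolicity guarantee that the Busemann function $\beta_\omega(z)=\lim_n\bigl(d(z,x_n^{-1})-R_n\bigr)$ exists along the subsequence and is well defined up to bounded error.

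It then remains to see that $\omega$ is a global cut point of $\partial G$. I would fix a horoball $HB=\{\beta_\omega\le -c'\}$ with $0<c'<c$ and show that $1$ and $y'$ lie in different connected components of $X_G\sm HB$. Indeed, any finite path $\tau$ joining $1$ to $y'$ in $X_G\sm HB$ is compact, so $\beta_\omega>-c'$ on $\tau$ with a positive gap; by the defining limit this forces $d(z,x_n^{-1})>R_n-c$ for all $z\in\tau$ once $n$ is large, so $\tau$ lies outside $B(x_n^{-1},R_n-c)$ and has length $<L_n$ for large $n$, contradicting the obstruction. Thus $1$ and $y'$ sit in distinct components $U_1\ni 1$ and $U_2\ni y'$ of $X_G\sm HB$, both unbounded since $HB$ is one-sided at $\omega$. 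As the closure of $HB$ meets $\partial X_G$ only at $\omega$, the limit sets $\partial_\infty U_1$ and $\partial_\infty U_2$ are nonempty, disjoint, and cover $\partial X_G\sm\{\omega\}$, so $\partial G\sm\{\omega\}$ is disconnected and $\omega$ is a global cut point.

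The step I expect to be the main obstacle is this last one: converting the metric obstruction, valid only at each finite scale $L_n$, into a genuine topological separation of $X_G$ by a single horoball, and then relating the unbounded components of the horoball complement to a disconnection of $\partial X_G\sm\{\omega\}$. Making the convergence $B(x_n^{-1},R_n-c)\to HB$ precise, checking that $U_1,U_2$ are genuinely unbounded, and verifying that rays into $U_1$ and $U_2$ cannot land at a common boundary point other than $\omega$ (so that the two limit sets meet only at $\omega$) all require the careful interplay of properness, visibility and slim-triangle estimates that I have only sketched here.
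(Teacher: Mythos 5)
The paper itself offers no proof to compare against: Proposition \ref{bmCP} is quoted from \cite{bm}, and in the text it is only \emph{used} (together with the cut-point theorems of \cite{bb,Sv,Sw}) to conclude that one-ended hyperbolic groups satisfy $\rel(M)$ for all $M$. Your overall strategy is the one of Bestvina--Mess: negate $\rel(M)$, translate the obstructing pairs to the basepoint, extract $\omega=\lim x_n^{-1}\in\partial X_G$ together with a subsequential limit $h$ of the functions $d(\cdot,x_n^{-1})-R_n$, and exhibit $\omega$ as the cut point. Your argument up to and including the separation claim is correct: the justification that $R_n\to\infty$, the translation bookkeeping ($d(1,x_n^{-1})=R_n$), and the compactness argument showing that a finite path from $1$ to $y'$ inside $\{h>-c'\}$ would contradict the obstruction for all large $n$ (this is exactly where $c'\leq c$ is needed, and you have it) are all sound.

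The genuine gap is the final step, the one you flag yourself. First, ``both unbounded since $HB$ is one-sided at $\omega$'' is not an argument, and with your shallow threshold $c'<c$ the claim is actually in doubt: the only escape routes to infinity one can construct --- apply Proposition \ref{grays} at the center $x_n^{-1}$ to get a geodesic ray passing within $c$ of $1$, then take an Arzel\`a--Ascoli limit of the tails \emph{along the same subsequence used to define} $h$, so that $h(\rho(t))\geq t-c$ --- are only guaranteed to stay in the region $\{h\geq -c\}$, not in $\{h>-c'\}$; so the component $U_1$ of $1$ in your $X_G\sm HB$ could a priori be a bounded pocket, and the whole conclusion would collapse. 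Showing that the classes of $1$ and of $y'$ accumulate at boundary points distinct from $\omega$ is the crux of the proposition, it requires precisely this grays-plus-limit construction, and the clean way to run it is to replace sublevel sets of $h$ by the relation ``joinable by a path avoiding $B(x_n^{-1},R_n-c)$ for all large $n$'', which is what the obstruction and Proposition \ref{grays} actually control (and which sidesteps the strict-versus-nonstrict threshold mismatch at $-c$). Second, the assertion that $\partial_\infty U_1$ and $\partial_\infty U_2$ are disjoint and \emph{cover} $\partial X_G\sm\{\omega\}$ is false as stated: $X_G\sm HB$ may have many other unbounded components contributing limit points. What is needed instead is that for each $\zeta\neq\omega$ all sufficiently deep points of a small shadow neighborhood of $\zeta$ lie in a single class (slim-triangle estimates show such shadows are coarsely connected and miss $HB$), giving a locally constant map from $\partial X_G\sm\{\omega\}$ to the set of classes; its fibers are clopen, and nonconstancy --- supplied exactly by the missing lemma applied at $1$ and at $y'$ --- yields the disconnection. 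So the architecture is right and matches \cite{bm}, but the proposal is incomplete at precisely the step that makes the proposition true.
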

Combined with a result of Bowditch, Svenson and Swarup 
\cite{bb,Sv,Sw}  which states that $\partial X_G$ has no global cut points, we 
derive that every one-ended hyperbolic group satisfies $\rel(M)$, for any $M>0$.
 
\subsection{Semistability and simple connectivity at infinity} 
The aim of this section is to put the sci growth into a more general context.  
Recall that a {\em ray} in a noncompact topological space $X$ is a proper map 
$\gamma:[0,\infty)\to X$. As above we consider $X$ to be connected locally compact 
locally simply connected topological space. 
Two rays $\gamma_1$ and $\gamma_2$  converge to the 
{\em same end} of $X$ if for any compact $C\subset X$ there exists  
$R$ such that $\gamma_1([R,\infty))$ and $\gamma_2([R,\infty))$ lie in the 
same component of $X\sm C$. The set of rays under this equivalence 
relation is the same as the set of ends of $X$.  
 
\begin{definition} 
An end of $X$ is {\em semistable} if any two rays of  
$X$ converging to this end 
are properly homotopic. This is equivalent (see e.g. \cite{G})  
to the following: for 
any ray $\gamma$ converging to the end and for any $n\geq0$ there exists 
$N\geq n$ such that any loop based on a point of $\gamma$ with image outside  
the metric ball $B(N)$ of radius $N$ and fixed center  
can be pushed (rel $\gamma$) to infinity by a homotopy in $X\sm B(n)$. 
\end{definition} 
 
A topological space is semistable if all its ends are semistable.  
This definition was extended to groups:
A finitely presented  
group $G$ is {\em semistable}   
if for some (equivalently any)  finite complex $X_G$ 
with $\pi_1 X_G=G$ its universal covering $\widetilde{X_G}$ is  
semistable.    
 
Many classes of groups are known to be semistable (see e.g.  
\cite{MiTsch,Mi3} and also \cite{bb,Sv,Sw} for the case of  
hyperbolic groups) but examples of  
finitely presented groups which are {\em not} semistable 
are still unknown.  
There is a well-defined notion of  
topological fundamental group at infinity  
associated to a semistable end of a group (see \cite{GM}).  
Now, following \cite{FO}, we consider the following metric  
refinement of the semistability: 
 
\begin{definition} 
Let $X$ be a non-compact metric space, $e$ an end of $X$ and $\gamma$ a 
ray converging to $e$. The {\em semistability growth} function $S_e(r)$ is 
the infimal $N(r)$ with the following property: for any 
$R\geq  N$ and any loop $l$ based on $\gamma$ which lies in $X\sm B(N)$ 
there exists a homotopy rel $\gamma$ supported in $X\sm B(r)$   
which moves $l$ to a loop in $X\sm B(R)$.  
\end{definition}

Set $S_G$ for $\sup_eS_e$, with $e$ running over the set of ends of $\widetilde{X_G}$, where 
$X_G$ is a finite complex with fundamental group $G$, whenever this 
is defined. It is not difficult to see that the equivalence class  
of $S_G$ is a well-defined quasi-isometry invariant of the  
finitely presented group $G$.

The principal result of this section is the following immediate 
 connection between  
sci growth and semistability growth:  
 
\begin{proposition}\label{ineq} 
Assume that $G$ is a finitely presented sci group.  
Then $V_{G} =  S_{G}$. 
\end{proposition}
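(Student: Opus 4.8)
The plan is to prove the two inequalities $S_G\le V_G$ and $V_G\le S_G$ separately. Throughout I use that an (infinite, finitely presented) sci group is one-ended, so that $\widetilde{X_G}$ has a single end $e$ and $S_G=S_e$; I also use that $X=\widetilde{X_G}$ is simply connected. Finally I fix a geodesic ray $\gamma$ issuing from the center of the balls $B(\,\cdot\,)$; since one checks directly that $V(r)\ge r$, the tail $\gamma((r,\infty))$ lies outside $B(r)$, a fact I use freely when sliding basepoints.

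For $S_G\le V_G$ I start from a loop $l$ based at a point $\gamma(t_0)$ lying outside $B(V(r))$. By the definition of the sci growth, $l$ bounds a $2$-disk $D\subset X\sm B(r)$. I first contract $l$ to the constant loop at $\gamma(t_0)$ inside $D$, a homotopy supported in $X\sm B(r)$; I then slide this constant loop along the tail of $\gamma$ from $\gamma(t_0)$ to $\gamma(t_1)$ with $t_1>R$. The concatenated homotopy is rel $\gamma$, is supported in $X\sm B(r)$, and carries $l$ to a loop outside $B(R)$ for every prescribed $R\ge V(r)$. This is exactly the defining property of $S_e$, whence $S_e(r)\le V(r)$ and therefore $S_G\le V_G$.

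For the reverse inequality I must show that every loop $l$ outside $B(S(r))$ bounds a disk in $X\sm B(r)$. Assume first that $l$ is based on $\gamma$. By the definition of the semistability growth I may push $l$ out to a loop $l_R$ outside $B(R)$ through a homotopy $A_R$ (a singular annulus) supported in $X\sm B(r)$, and I am free to take $R$ arbitrarily large. Let $\rho$ be a radius such that the sci property for the compact set $\overline{B(r)}$ guarantees that every loop outside $B(\rho)$ bounds a disk in $X\sm B(r)$, and choose $R\ge\max(\rho,S(r))$. Then $l_R$ bounds a disk $D_R\subset X\sm B(r)$, and gluing $A_R$ to $D_R$ along $l_R$ yields a singular disk bounding $l$ inside $X\sm B(r)$. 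Thus based loops outside $B(S(r))$ bound disks outside $B(r)$.

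It remains to drop the hypothesis that $l$ is based on $\gamma$, and this is the step I expect to be the main obstacle. Here I invoke one-endedness: for the compact set $\overline{B(r)}$ there is a companion compact $K$ such that any point outside $K$ can be joined to $\gamma$ by a path avoiding $B(r)$. Joining a basepoint of $l$ to $\gamma$ by such a path $\alpha$ replaces $l$ by the based loop $\alpha^{-1}\!\cdot l\cdot\alpha$, which differs from $l$ by a degenerate bigon lying outside $B(r)$; applying the previous paragraph and absorbing the bigon produces a disk for $l$ in $X\sm B(r)$. The delicate point is that $\alpha$ may dip inward from $B(S(r))$ toward $B(r)$, so one must verify that the based-loop estimate can be applied at the correct radius, i.e.\ that the one-endedness constant attached to $\overline{B(r)}$ is dominated by $S(r)$. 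Once this matching is settled we obtain $V(r)\le S(r)$, and together with the first inequality this gives $V_G=S_G$.
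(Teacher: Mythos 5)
Your first inequality ($S_G\le V_G$) and your treatment of based loops in the second one are exactly the paper's argument: a singular disk outside $B(r)$ gives a based nullhomotopy outside $B(r)$, after which the basepoint is slid along $\gamma$; conversely, semistability pushes a based loop beyond the sci radius $\rho$, and the resulting annulus is capped off by an sci disk. Up to that point the proposal is correct and coincides with the paper's proof.

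The genuine gap is your final paragraph, and it is worse than your own flag suggests: the ``matching'' you ask for would not rescue the argument even if it held. If $\alpha$ is only guaranteed to avoid $B(r)$, then $\alpha^{-1}\cdot l\cdot\alpha$ is only guaranteed to lie outside $B(r)$, while your based-loop step requires a loop outside $B(S(r))$; knowing that the one-endedness constant of $\overline{B(r)}$ is at most $S(r)$ does nothing to repair this mismatch. What you actually need is a path $\alpha$ avoiding $B(S(r))$ itself, i.e.\ you need $l$ to lie in the \emph{unbounded} component of $X\sm B(S(r))$ (which also contains the tail of $\gamma$, a one-ended space having a unique unbounded complementary component of each compact). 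The paper's own Proposition \ref{enddepth} is the tool that closes this: the end-depth bound $V_0(\rho)\le 2\rho$ shows every bounded component of $X\sm B(S(r))$ lies inside $B(2S(r))$, so any loop outside $B(2S(r))$ can be conjugated onto $\gamma$ by a path outside $B(S(r))$ and then bounds a disk outside $B(r)$ by your based-loop step. This gives $V(r)\le 2S(r)$, hence equality of the rough equivalence classes of $V_G$ and $S_G$ --- which is all the paper ever uses --- but not the pointwise identity as literally stated; loops trapped in bounded ``dead-end'' components of $X\sm B(S(r))$ are exactly the ones left unaddressed. To be fair, the paper's own proof silently elides the same point by applying the semistability pushing to an arbitrary free loop, which the stated (based-on-$\gamma$) definition of $S_G$ does not license. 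Finally, your blanket claim that sci groups are one-ended does not follow from the paper's loop-only definition of sci (a tree satisfies it vacuously); the paper instead works per end and per complementary component, and your reduction should too.
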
 
\begin{proof} 
For given $r$ as the space $\widetilde{X_G}$ is sci there  
exists some large enough $N(r)$ so that any loop 
within $\widetilde{X_G}\sm B(N(r))$ bounds a disk outside $B(r)$.  
Let $l$ be a loop not intersecting $B(S_G(r))$. By the  
semistability assumption one can homotope $l$ in $\widetilde{X_G}\sm B(r)$ 
to a loop $l'$ lying  within $\widetilde{X_G}\sm B(N(r))$. But $l'$ bounds a disk  
outside $B(r)$ and hence $l$ bounds a disk outside $B(r)$.  
This proves that $V_G(r)\leq S_G(r)$. 
 
For the reverse inequality let $l$ be a loop based at  
$\gamma(V_G(r)+\varepsilon)$ (for arbitrarily small $\varepsilon$)  
outside $B(V_G(r))$, where $\gamma$ is a given ray. Then $l$  
bounds a disk outside $B(r)$, which yields a nullhomotopy of the based  
loop $l$ to  the base point $p$. We push then $p$ along $\gamma$  
as far as we want. This proves that $S_G(r)\leq V_G(r)$.  
\end{proof} 
 
It follows that Theorem \ref{hyperbolicsci} is an immediate consequence of the more general: 

\begin{theorem}\label{hyperbolic}
The semistability growth of word hyperbolic groups is linear. 
\end{theorem}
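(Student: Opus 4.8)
The plan is to reduce to a one-ended group and then to push loops radially outward, one spherical shell at a time, keeping the inward excursion of the homotopy bounded by a constant independent of the radius. We may assume $G$ is one-ended: the finite and two-ended cases are immediate, and for infinitely many ends a Stallings splitting over finite subgroups localizes the estimate onto the one-ended vertex groups. Fix $\delta$ so that $G$ is $\delta$-hyperbolic. By Proposition~\ref{bmCP} and the cited result that $\partial X_G$ has no global cut point, $X_G$ satisfies $\rel(M)$ for every $M>0$, with some constant $L=L(M)$; and by Proposition~\ref{grays} every vertex lies within the constant $c$ of a geodesic ray issuing from $1$. The goal is a uniform bound $S_G(r)\le r+C$, which through Proposition~\ref{ineq} also yields Theorem~\ref{hyperbolicsci}.

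The heart of the matter is a single outward push. Represent a loop $\ell$ lying outside $B(\rho)$ by an edge-path with vertices $x_0,\dots,x_k=x_0$ at consecutive distance $1$ and each $d(x_i,1)\ge\rho$, with $x_0$ on the prescribed ray $\gamma$. For each $i$ pick, via Proposition~\ref{grays}, a geodesic ray $\gamma_i$ from $1$ passing within $c$ of $x_i$ (with $\gamma_0=\gamma$), and push $x_i$ outward to $y_i=\gamma_i\big(d(x_i,1)+t\big)$ with $t=2c+2$. The triangle inequality gives $d(x_i,y_i)\le 4c+2$, hence $d(y_i,y_{i+1})\le M_0$ for a constant $M_0=M_0(c)$, while the push-out path from $x_i$ to $y_i$ never dips below radius $d(x_i,1)-c\ge\rho-c$. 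After sliding outward by at most $1$ along the rays to equalise the radii of $y_i$ and $y_{i+1}$, $\rel(M_0)$ reconnects them by a path of length $\le L(M_0)$ lying outside $B(\rho+2)$. The new loop $\ell'$ is an edge-path with every vertex at radius $\ge\rho+2$, and the homotopy carrying $\ell$ to $\ell'$ is supported in $X_G\sm B(\rho-c)$.

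Iterating this step starting from a loop outside $B(N)$ yields loops outside $B(N+2),B(N+4),\dots$ and, after finitely many steps, a loop outside $B(R)$. Each step is supported outside $B(\rho-c)$ for a value $\rho\ge N$ that only increases, so the concatenated homotopy is supported outside $B(N-c)$ and keeps the base point on $\gamma$; this is exactly the inequality $S_G(r)\le r+c$. To upgrade this combinatorial recipe to an honest homotopy one fills, edge by edge, the quadrilateral bounded by $[x_i,x_{i+1}]$, the two push-out paths and the reconnecting $\rel$-path by a disk in the simply connected complex $X_G$.

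The main obstacle is the radius control of these filling disks: their boundaries have uniformly bounded length, and one must show they can be chosen to remain outside $B(\rho-c')$ for some $c'$ independent of $\rho$ and of $\ell$. This is where hyperbolicity is essential---through the linear isoperimetric inequality and the fact that in a $\delta$-hyperbolic complex a loop of length $\le\Lambda$ bounds a disk contained in a $\Lambda'$-neighbourhood of it, with $\Lambda'=\Lambda'(\Lambda,\delta)$. The accompanying book-keeping point is that the size $M_0$ to which $\rel$ is applied stays \emph{fixed} throughout the iteration: re-subdividing each reconnecting path into unit edges resets, at every shell, the divergence of distinct geodesic rays that would otherwise let consecutive vertices drift apart, at the harmless cost of inflating the number of edges.
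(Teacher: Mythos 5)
Your proof is correct and follows essentially the same route as the paper's: both rest on Proposition~\ref{grays} and on $\rel(M)$ (deduced from Proposition~\ref{bmCP} together with the no-cut-point theorem), and both build the homotopy to infinity out of uniformly bounded quadrilaterals whose sides are segments of geodesic rays and $\rel(M)$-paths, filled by disks of uniformly bounded size staying far from the origin. The differences are only organizational: you iterate shell-by-shell with a fixed push-out $2c+2$ and appeal to a bounded-filling fact, while the paper builds per-edge ladders (Lemmas~\ref{cll1} and~\ref{cl2}), advances one unit at a time using $\delta$-slimness, and enlarges the presentation so that the bounded loops bound single 2-cells.
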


\subsection{Proof of Theorem \ref{hyperbolic}} 
 
Consider first the case when  
$G$ is a one-ended hyperbolic group. Let $\delta$ be the hyperbolicity constant for a Cayley complex $X_G$ of $G$, and let $c$ be the constant 
provided by Proposition \ref{grays}. By Proposition \ref{bmCP} 
and the absence of cut points (from \cite{bb,Sv,Sw}), $X_G$ satisfies 
$\rel(M)$ for some $M>6c+2\delta+3$ with constant $L>2c+4$.  
Without loss of generality we can assume 
that $X_G$ is associated with a presentation of $G$ that 
contains as relators all words of length less than  
$2L+4c$ which are equal to the identity in $G$. In this section, unless stated otherwise, the balls we consider will be centered at the identity.
 
Let $n\in\mathbb{Z}_+$ and $\gamma$ be a geodesic ray in $X_G$ that 
starts from identity. We will show that every  
loop $f$ based at a point $x\in\gamma$ and which lies outside $B(n+2c)$, can be 
pushed rel $\gamma$ arbitrarily far away by a homotopy outside $B(n)$.

Let $p$, $q$ be two adjacent vertices of $f$ and $r=d(p,1) > n+2c$.  
There exist unit speed geodesic rays $\gamma_0$ and $\gamma_1$ issuing from the  
identity which pass within $c$ of $p$ and $q$, respectively.

We now establish following (\cite{bm}, Prop.3.2) the following lemma:
\begin{lemma}\label{cll1}
For every integer $i\geqslant 0$, there 
exists a path $f_i$ from 
$\gamma_0(r+i)$ to 
$\gamma_1(r+i)$ such that:
\begin{enumerate}
 \item The path $f_i$ lies outside $B(r+i-c)$;
 \item For any $j\in\{0,1,\ldots,L^i\}$, there is a unit speed geodesic ray, 
$\gamma_{\frac{j}{L^i}}$ issuing from the identity such that:
\[\gamma_{\frac{j}{L^i}}(r+i)\in f_i,\]
and for $j < L^i-1$,
\[d\left(\gamma_{\frac{j}{L^i}}(r+i),\gamma_{\frac{j+1}{L^i}}
(r+i)\right)\leq M.\] 
\end{enumerate}
\end{lemma}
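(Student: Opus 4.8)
The plan is to argue by induction on $i$, producing at each stage not only the path $f_i$ but also the finite family of geodesic rays $\gamma_{j/L^i}$, $0\le j\le L^i$, whose values at radius $r+i$ are the marked points lying on $f_i$. For the base case $i=0$ I would take $\gamma_0,\gamma_1$ to be the two rays furnished by Proposition \ref{grays} near $p$ and $q$. Since $\gamma_0$ passes within $c$ of $p$ and $d(p,1)=r$, the point $\gamma_0(r)$ lies within $2c$ of $p$, and likewise $\gamma_1(r)$ within $2c$ of $q$; as $d(p,q)\le 1$ this gives $d(\gamma_0(r),\gamma_1(r))\le 4c+1<M$. Hence $\rel(M)$ applies at radius $R=r$ and yields a path $f_0$ joining $\gamma_0(r)$ to $\gamma_1(r)$ outside $B(r-c)$, which is exactly the required data for $i=0$ (here $L^0=1$, so there is a single interval and no interior constraint to verify).

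For the inductive step, suppose $f_i$ and the rays $\gamma_{j/L^i}$ are already built. Because consecutive marked points satisfy $d(\gamma_{j/L^i}(r+i),\gamma_{(j+1)/L^i}(r+i))\le M$ and both lie at radius exactly $r+i$, I would apply $\rel(M)$ with $R=r+i$ to each consecutive pair, obtaining a path $\sigma_j$ of length at most $L$ lying outside $B(r+i-c)$. Its successive vertices are pairwise adjacent, hence within $1$ of each other; through each vertex I pass a geodesic ray issuing from the identity and within $c$ of it, again by Proposition \ref{grays}, keeping $\gamma_{j/L^i}$ and $\gamma_{(j+1)/L^i}$ as the two extreme rays. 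After relabelling (and, when $\sigma_j$ is shorter than $L$, repeating some rays so that each of the $L^i$ old intervals is split into exactly $L$) one obtains the family $\gamma_{j'/L^{i+1}}$, $0\le j'\le L^{i+1}$. Finally I would define $f_{i+1}$ by joining the consecutive points $\gamma_{j'/L^{i+1}}(r+i+1)$ by the paths provided by $\rel(M)$ at radius $R=r+i+1$; each such path lies outside $B(r+i+1-c)$, so the concatenation is a path outside $B(r+i+1-c)$ through all the new marked points, giving condition (1) and the membership part of condition (2).

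The whole argument rests on the remaining estimate, the $M$-bound in condition (2) at level $i+1$, and this is the step I expect to be the main obstacle. Concretely, if $w,w'$ are adjacent vertices of some $\sigma_j$ and $\alpha,\alpha'$ are the rays chosen through them, I must bound $d(\alpha(r+i+1),\alpha'(r+i+1))$ by $M$. The difficulty is that $\sigma_j$ may wander outward: a vertex $w$ can sit at radius as large as $r+i+L/2$, so that $\alpha(r+i+1)$ is reached only after travelling a long way back along $\alpha$, and naive divergence bounds along $\sigma_j$ are far too weak. The way around this is to compare $\alpha$ with a geodesic $[1,w]$: since $\alpha$ comes within $c$ of $w$, these two geodesics from the identity have endpoints within $2c$ and therefore $\delta$-fellow-travel, so at radius $r+i+1$ the point $\alpha(r+i+1)$ lies within a bounded distance of $[1,w](r+i+1)$ whenever $d(w,1)\ge r+i+1$; in the opposite case $d(w,1)<r+i+1$ one checks directly that $\alpha(r+i+1)$ is within $2c+1$ of $w$ itself. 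Running the same comparison for $w'$, using $d(w,w')\le 1$ together with the $\delta$-fellow-travelling of $[1,w]$ and $[1,w']$, and chaining the resulting inequalities, yields an explicit estimate which the choice $M>6c+2\delta+3$ renders strictly smaller than $M$; the constraints on $M$ and $L$ (in particular $L>2c+4$) are arranged precisely so that these radial comparisons close and so that the vertices of the $\sigma_j$ suffice to realise the $L$-fold subdivision. This completes the induction.
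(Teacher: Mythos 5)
Your construction is the same as the paper's: induction on $i$, base case by the triangle inequality plus $\rel(M)$, inductive step by joining consecutive marked points at radius $r+i$ with $\rel(M)$-paths, passing new rays through their vertices via Proposition \ref{grays}, and closing up with $\rel(M)$ at radius $r+i+1$. The gap is in the one step you yourself call the crux: the bound $d(\alpha(r+i+1),\alpha'(r+i+1))\leq M$ for the rays $\alpha,\alpha'$ through adjacent vertices $w,w'$. Your route compares $\alpha$ with $[1,w]$, then $[1,w]$ with $[1,w']$, then $[1,w']$ with $\alpha'$. Each of these is a fellow-travelling estimate for two geodesics issuing from $1$ whose far endpoints are a bounded distance $D$ apart, and each costs roughly $D+2\delta$ (more when the closest-point parameter on the ray falls below $r+i+1$, a case you only partly treat and where your constant is off: one gets $3c+1$ there, not $2c+1$). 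Chaining three such estimates accumulates about $6\delta$: even in the most favourable case you get roughly $2c+6\delta+1$, and in the worst case roughly $6c+6\delta+1$. Neither is forced to be below $6c+2\delta+3$, so under the standing hypothesis that $M$ is merely \emph{some} number exceeding $6c+2\delta+3$, your final inequality is not established; your assertion that the constraints on $M$ and $L$ were ``arranged precisely'' so that your comparisons close is not correct — they were arranged for a different argument, and nothing prevents $\delta$ from being large compared to $c$, in which case your chained bound exceeds the threshold.

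The paper's argument avoids the geodesics $[1,w]$, $[1,w']$ and the triple chaining altogether. From the closest point $y\in\alpha$ to $w$ (which satisfies $d(y,1)\geq r+i-2c$) one moves \emph{out along the ray itself} to a point $z\in\alpha$ with $d(z,1)\geq r+i+1$ and $d(z,y)\leq 2c+1$; doing the same on $\alpha'$ gives $z'$, and then $d(z,z')\leq (2c+1)+c+1+c+(2c+1)=6c+3$. A \emph{single} comparison along the $\delta$-slim triangle with vertices $1,z,z'$ — legitimate at parameter $r+i+1$ precisely because $z,z'$ were pushed beyond that radius, so no case analysis on $d(w,1)$ is needed — yields $d(\alpha(r+i+1),\alpha'(r+i+1))\leq 6c+2\delta+3<M$. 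Your proof can be repaired either by adopting this one-triangle comparison, or, more crudely, by noting that $\rel(M)$ holds for \emph{every} $M>0$ (Proposition \ref{bmCP} together with the absence of global cut points) and re-fixing $M$ above whatever constant your chaining actually produces; but that has to be said explicitly, with the constants tracked, rather than asserted. (The same looseness appears in your base case: $d(\gamma_1(r),q)\leq 2c+1$, not $2c$, since $d(q,1)$ need not equal $r$, giving $4c+2$ rather than $4c+1$; harmless, but symptomatic.)
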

\begin{proof} 
We use induction on $i$. 
Using the triangle inequality we obtain that   
$d(\gamma_0(r),p)\leq 2c$ and 
$d(\gamma_1(r),q)\leq 2c+1$, so that $d(\gamma_0(r),\gamma_1(r))\leq 
4c+2<M$. By property $\rel(M)$, there is a path $f_0$ 
of length at most $L$ which joins $\gamma_0(r)$ to $\gamma_1(r)$ outside $B(r-c)$, 
hence the claim holds for $i=0$.   

Assume now that the result holds for some $i\geq 0$. 
By property $\rel(M)$, for any $j\in\{0,1,\dots,L^i-1\}$,  there is a path 
$\alpha_j:[0,L]\to X_G$, of at most unit speed, that joins 
$\gamma_{\frac{j}{L^i}}(r+i)$ to $\gamma_{\frac{j+1}{L^i}}(r+i)$ and lies 
outside $B(r+i-c)$.

Next, for any $k\in\{1,\ldots,L-1\}$, there exists a 
geodesic ray $\gamma_k'$, issuing from 
$1$ that passes within $c$ of $\alpha_j(k)$. Let $y_k\in
\gamma_k'$ be a closest point to 
$\alpha_j(k)$. Then $d(y_k,\alpha_j(k))\leq c$, so that $d(y_k,1)\geq r+i-2c$. 
It follows that there exists $z_k\in \gamma_k'$ 
such that $d(z_k,1)\geq r+i+1$ and $d(z_k,y_k)\leq 2c+1$. We obtain therefore that  
$d(z_k,z_{k+1})\leq 6c+3$. As the 
geodesic triangle of vertices $1,z_k,z_{k+1}$ is $\delta$-slim we derive that: 
\[d(\gamma_k'(r+i+1),\gamma_{k+1}'
(r+i+1))\leq 6c+2\delta+3<M.\]
Therefore, from $\rel(M)$ there exists a path of length at most 
$L$ which lies  outside $B(r+i+1-c)$ and joins 
$\gamma_k'(r+i+1)$ to 
$\gamma_{k+1}'(r+i+1)$. 
The concatenation of these paths yields the desired $f_{i+1}$. We set $\gamma_{\frac{jL+k}{L^{i+1}}}$ to be the geodesic ray $\gamma_k'$, thereby concluding the induction step. Notice that:
\[\ell([\gamma_{\frac{j}{L^i}}(r+i),\gamma_{\frac{j+1}{L^i}}
(r+i)]_{f_i})\leq 
L.\]
\end{proof}

Let $P, Q$ be two geodesic arcs joining $p$ and $q$ to 
$\gamma_0(r)$ and $\gamma_1(r)$, respectively. For any $N\geq 0$, let 
$\Phi_N(p,q)$ be the closed oriented loop obtained by concatenation of  
$P$, $\gamma_0([r,r+N])$, 
$f_{N}$, $\gamma_1([r,r+N])^{-1}$, $Q^{-1}$ and the edge $qp$.  
 
\begin{lemma}\label{cl2} 
The loop $\Phi_N(p,q)$ is null homotopic outside $B(n)$, for $N\geq 0$. 
\end{lemma}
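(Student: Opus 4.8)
The plan is to prove the statement by induction on $N$, producing at each stage a null homotopy of $\Phi_N(p,q)$ built entirely out of $2$-cells whose boundary vertices lie outside $B(n)$. For the base case $N=0$ the loop $\Phi_0(p,q)$ is the concatenation of the geodesic arcs $P$ and $Q$ (each of length at most $2c+1$ by the estimates of Lemma \ref{cll1}), the path $f_0$ (of length at most $L$) and the single edge $qp$, so its total length is at most $L+4c+2<2L+4c$. Since $r>n+2c$, every point of $P$, $Q$, $f_0$ and the edge $qp$ lies at distance at least $r-2c>n$ from the identity, hence the whole loop lies outside $B(n)$; as we added to the presentation all null homotopic words of length $<2L+4c$, it bounds a single $2$-cell outside $B(n)$.

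For the inductive step I would compare $\Phi_N$ and $\Phi_{N+1}$ across the annular region lying between $f_N$ and $f_{N+1}$ and bounded laterally by $\gamma_0([r+N,r+N+1])$ and $\gamma_1([r+N,r+N+1])$. Using the construction in the proof of Lemma \ref{cll1}, I would tile this annulus by quadrilateral cells, one for each consecutive pair of rays $\gamma_k',\gamma_{k+1}'$ arising there: taking $\alpha_j$ to be the subarc of $f_N$ between $\gamma_{j/L^N}(r+N)$ and $\gamma_{(j+1)/L^N}(r+N)$ (legitimate since that subarc has length at most $L$ and lies outside $B(r+N-c)$), the bottom side of a cell is the one–step piece of $f_N$ from $\alpha_j(k)$ to $\alpha_j(k+1)$, the top side is the $\rel(M)$ path of length at most $L$ joining $\gamma_k'(r+N+1)$ to $\gamma_{k+1}'(r+N+1)$ that makes up $f_{N+1}$, and each lateral side runs from $\alpha_j(k)$ to its nearest point $y_k$ on $\gamma_k'$ and then along $\gamma_k'$ up to $\gamma_k'(r+N+1)$. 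The interior lateral sides are shared by adjacent cells and cancel, so the boundary of the tiled annulus is exactly $\gamma_0([r+N,r+N+1])\cdot f_{N+1}\cdot\gamma_1([r+N,r+N+1])^{-1}\cdot f_N^{-1}$; filling it exhibits $\Phi_{N+1}$ as homotopic to $\Phi_N$, and the inductive hypothesis then finishes the step.

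The crux, and the step I expect to be the main obstacle, is to check that each such cell is a single relator lying outside $B(n)$. Each piece of $f_N$ is an $\rel(M)$ path of length at most $L$ whose endpoints sit on the sphere of radius $r+N$; since the distance to the identity is $1$-Lipschitz along paths, such a piece strays at most $L/2$ above that sphere, so $r+N-2c\le d(y_k,1)\le r+N+L/2+c$. Hence each lateral side has length at most $c+|d(y_k,1)-(r+N+1)|\le L/2+2c-1$, where the bound uses precisely the hypothesis $L>2c+4$, and a cell has perimeter at most $L+1+2(L/2+2c-1)=2L+4c-1<2L+4c$, so it bounds a single $2$-cell of $X_G$. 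The top sides remain short because the $\delta$-slimness of the triangles $1,z_k,z_{k+1}$ gives $d(\gamma_k'(r+N+1),\gamma_{k+1}'(r+N+1))<M$, making $\rel(M)$ applicable, which is exactly where $M>6c+2\delta+3$ enters. Finally all vertices appearing in the tiling lie at distance at least $r+N-2c>n$ from the identity, so the entire homotopy avoids $B(n)$, completing the induction.
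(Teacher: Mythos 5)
Your proposal is correct and takes essentially the same approach as the paper: the paper likewise fills $\Phi_0(p,q)$ by a single relator (using the hypothesis on the presentation) and then tiles each annulus between $f_i$ and $f_{i+1}$ by exactly the cells you describe --- bounded by $\beta_k$, the arcs $[y_k,\gamma_k'(r+i+1)]_{\gamma_k'}$ along the rays, a subpath of $f_{i+1}$ of length at most $L$ and a unit-length subpath of $f_i$ --- with the same perimeter bound $2L+4c-1<2L+4c$ and the same conclusion that everything stays outside $B(r+i-2c)\supset B(n)$. Your induction on $N$ is merely a repackaging of the paper's decomposition of $\Phi_N(p,q)$ as the product of $\Phi_0(p,q)$ with the annular loops $\Phi_0\Phi_1\cdots\Phi_{N-1}$, each further split into the subloops $\Phi_i(j)$.
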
 
\begin{proof}
The closed  loop $\Phi_0(p,q)$ has length at most $L+4c+2$ and 
lies outside $B(r-c-1)$. By our hypothesis on the group presentation, 
this loop bounds a 2-cell $A_0$ in the Cayley complex, thereby proving 
the claim for $N=0$.

Now, for any $i\in \{0,1,\ldots,N-1\}$, let $\Phi_i$ be the concatenation  of  the four paths $\gamma_0([r+i,r+i+1])$, $f_{i+1}$, $\gamma_1([r+i,r+i+1])^{-1}$, and $f_{i}^{-1}$. Then, $\Phi_N(p,q)$ is homotopic to the product of $\Phi_0(p,q)$   
and $\Phi_0\Phi_{1}\cdots \Phi_{N-1}$. 
We can further  decompose each $\Phi_i$ as the composition of loops 
$\Phi_i(j)$, consisting of the concatenation of the 
following four paths: $\gamma_{\frac{j}{L^i}}([r+i,r+i+1])$, 
$[\gamma_{\frac{j}{L^i}}(r+i+1), \gamma_ 
{\frac{j+1}{L^i}}(r+i+1)]_{f_{i+1}}$, 
$\gamma_{\frac{j+1}{L^i}}([r+i,r+i+1])^{-1}$, and 
$[\gamma_{\frac{j}{L^i}}(r+i), \gamma_ 
{\frac{j+1}{L^i}}(r+i)]_{f_{i}}^{-1}$.

Recall from above that $\ell([\gamma_{\frac{j}{L^i}}(r+i),\gamma_{\frac{j+1}{L^i}}
(r+i)]_{f_i})\leq 
L.$ For fixed $i$ and $j$, let $\alpha_j:[0,L]\to X_G$ denote a path with image:
$$a_j([0,L]) = [\gamma_{\frac{j}{L^i}}(r+i),\gamma_{\frac{j+1}{L^i}}
(r+i)]_{f_i}.$$
So then $a_j$ has at most unit speed and lies outside $B(r+i-c)$. For any 
$k\in\{0,\ldots,L-1\}$, denote by $y_k$ 
a point on the geodesic $\gamma_k'=\gamma_{\frac{jL+k}{L^{i+1}}}$ which is closest 
to $a_j(k)$, and by $\beta_k$ a geodesic path that joins 
$\alpha_j(k)$ to $y_k$. Then, $d(y_k,a_j(k))\leq c$ and 
$$r+i-2c\leq d(y_k,1)\leq \frac{L}{2}+r+i+c.$$
It follows that $d(y_k,\gamma_k'(r+i+1))\leq \frac{L}{2}+c+1$ and the path $[y_k,\gamma_k'(r+i+1)]_{\gamma_k'}$ lies outside $B(r+i-2c)$. Therefore, the closed loop obtained by concatenating 
$\beta_k$, $[y_k,\gamma_k'(r+i+1)]_{\gamma_k'}$, $[\gamma_k'(r+i+1),\gamma_{k+1}'(r+i+1)]_{f_{i+1}}$, $[\gamma_{k+1}'(r+i+1),y_{k+1}]_{\gamma_{k+1}'}$, $\beta_{k+1}$,  and $[\alpha_j(k),\alpha_j(k+1)]_{\alpha_j}$ 
has length at most $2L+4c-1$ and lies outside $B(r+i-2c)$. Hence, it 
bounds a 2-cell $A_{i,j}(k)$ in the Cayley complex outside $B(n)$.
The union $A_{i,j}$ of all these 2-cells  
$\cup_{k\in\{0,\ldots,L-1\}}A_{i,j}(k)$ is the image of a disk filling  
the loop $\Phi_i(j)$ outside $B(n)$. This proves the Lemma. 
\end{proof}

The composition of all loops of the $\Phi_N(p,q)$, for $p,q$ successive 
vertices of the loop $f$ is then freely homotopic within $X_G\sm B(n)$ 
to a loop consisting of concatenations of paths of the form 
$f_{N}$, which lay outside  $B(r+N-c)$. It follows that for any $N\geq0$  
our initial loop $f\subset X_G\sm B(n+2c)$ is homotopic 
rel $\gamma$ to a loop in $X_G\sm B(r+N-c)$ by a homotopy outside $B(n)$.

Eventually, when $G$ is not one-ended, we work on the 
connected components of  $X_G\sm B(n+2c)$. 
This proves Theorem \ref{hyperbolic}.

\section{Proof of Theorem \ref{amalgam}}
\subsection{Preliminaries on the end-depth}
The sci and its refinement (the sci growth rough equivalence class)  
are 1-dimensional invariants at infinity for a group 
$G$, in the sense that they take care of loops and disks. The 
0-dimensional analogue of the simple connectivity at 
infinity is the connectivity at infinity, namely the one-endedness. 
One could adapt the notion of sci growth to the  
growth of an end. This was already considered by Cleary and Riley  
(see \cite{CR}). This leads to the following metric refinement which is  
the $0$-dimensional counterpart of the sci growth:  
 
\begin{definition} 
Let $X$ be a one-ended metric space. The {\em end-depth}  $V_0(X)$ of $X$  is 
the infimal $N(r)$ with the property that any two points which sit 
outside the ball $B(N(r))$ of radius $N(r)$ can be joined by a 
path outside $B(r)$. 
 
If $G$ is a finitely generated one-ended group then the end-depth of $G$  
is the (rough) equivalence class of the real function   
$V_{0,G}=V_0(X_G)$,  
where ${X_G}$ is  a Cayley graph of $G$ associated to a  
finite generating set.  
\end{definition} 
One can define in the same way the end-depth of a specific end  
of a space or finitely generated group which are not necessarily one-ended.  
In \cite{Ot-08} one proved that the  
(rough) equivalence class of $V_{0,G}$ is a well-defined quasi-isometry 
invariant of one-ended finitely presented groups. 
Examples of groups whose Cayley graphs have {\em dead-ends} (i.e.  
end-depth functions strictly larger than $x+c$, for any $c$)  
were obtained in \cite{CR}.  
Our second result shows that the (rough) equivalence class  
of the end-depth is not meaningful:

\begin{proposition}\label{enddepth} 
Every finitely generated one-ended group has linear end-depth. More  
precisely we have the inequality:  
$$V_0(X_G)(r)\leq 2r, \; for \; large\; enough \; r,$$  
where $X_G$ denotes the Cayley graph associated to a finite generating set of the group $G$. 
\end{proposition}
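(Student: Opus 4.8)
The plan is to recast the inequality as a statement about the components of the complement of a ball and then to establish it with a single elementary estimate. First I would record the structural consequences of one-endedness: for each $r$ the set $X_G\sm B(r)$ has only finitely many connected components (each touches the finite frontier of $B(r)$), and exactly one of them, say $U$, is unbounded. Two points of norm $>2r$ can be joined by a path outside $B(r)$ precisely when they lie in the same component, so the inequality $V_0(X_G)(r)\le 2r$ is equivalent to the assertion that \emph{every bounded component of $X_G\sm B(r)$ is contained in $B(2r)$}. This is the form I would actually prove.

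The engine would be the following reachability estimate, which is where the constant $2$ comes from. If $x$ is a vertex with $d(x,1)>r$ and $w$ is any vertex with $d(x,w)\le d(x,1)-r-1$, then every point $p$ on a geodesic $[x,w]$ satisfies
\[ d(p,1)\ \ge\ d(x,1)-d(x,p)\ \ge\ d(x,1)-d(x,w)\ \ge\ r+1, \]
so the whole geodesic, and hence the entire ball $B(x,\,d(x,1)-r-1)$, lies outside $B(r)$ and therefore inside the component of $x$. Note that as soon as $d(x,1)>2r$ the available radius $d(x,1)-r-1$ is at least $r$, so the component of $x$ already contains a full ball of radius $r$ about $x$; equivalently the translate $xB(r)=\{xg:\ d(g,1)\le r\}$, which is disjoint from $B(r)$ because $d(xg,1)\ge d(x,1)-r>r$, is contained in the component of $x$.

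Using this I would argue that a point $x$ with $d(x,1)>2r$ cannot sit in a bounded component $F$. Suppose it does. Reachability lets me move from $x$, staying outside $B(r)$, to every vertex within distance $d(x,1)-r-1$; I would try to chain such moves so as to reach vertices of ever larger norm and thereby force the component to be unbounded. The clean case is when a geodesic between two given far points never descends to norm $\le r$, or more generally when the arcs reachable from each endpoint along that geodesic overlap (which happens exactly when $d(x,y)\le d(x,1)+d(y,1)-2r-2$); there the two reachable arcs splice to a path outside $B(r)$ and we are done.

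The main obstacle is the complementary, ``deep'' case, in which a geodesic plunges to norm $\le r$ and the two reachable arcs leave a gap sitting inside $B(r)$; equivalently, the case of a bounded component which reaches past $B(2r)$ and whose outermost points are deep dead ends. Here reachability from inside the component only produces points of the same component, so it cannot by itself exhibit an escape. To break this I would translate by $x$: since $xB(r)\subseteq F$ lies inside the component and is disjoint from $B(r)$, the picture after applying the isometry $g\mapsto x^{-1}g$ is that $1$ is trapped by the far, radius-$r$ ball $x^{-1}B(r)$, which then contains a whole central ball $B(d(x,1)-r-1)$ inside its bounded region. The delicate point I expect to be the crux is proving that a single ball of radius $r$, placed at distance about $d(x,1)>2r$ from the center, is \emph{too small} to separate that large central ball from infinity — one must genuinely get around the far small ball — and it is precisely one-endedness (a unique unbounded component, with the complement of a finite set having all but one component bounded) that I would invoke to rule this out and obtain the desired contradiction.
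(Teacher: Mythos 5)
Your reduction (the inequality is equivalent to: every bounded component of $X_G\smallsetminus B(r)$ lies in $B(2r)$) and your reachability estimate are both correct, and they match the structure of the paper's argument. But the final step is a genuine gap, not a deferred technicality. The ``crux'' you leave open --- that a ball of radius $r$ centred at a point $c$ with $d(c,1)>2r$ cannot separate $1$ from infinity --- is, after the translation $g\mapsto c^{-1}g$, \emph{exactly} the statement you set out to prove: the component of $1$ in $X_G\smallsetminus B(c,r)$ is bounded if and only if the component of $c^{-1}$ in $X_G\smallsetminus B(r)$ is bounded, and $c^{-1}$ ranges over all points of norm $>2r$ as $c$ does. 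So your argument closes a circle rather than a proof. Moreover, one-endedness cannot supply the missing step: one-endedness gives a \emph{unique unbounded} component (which you already used, correctly, in your step 1), but it puts no bound whatsoever on the depth of bounded components. For a general one-ended locally finite graph the claim is simply false: attach a finite path of length $100$ to the origin of a half-line; the complement of $B(1)$ has a bounded component reaching depth $100$. Any correct proof must therefore exploit homogeneity in an essential way, and a single translation (which is an equivalence, as above) does not do that.

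The ingredient you are missing is the paper's key lemma: in a homogeneous, locally finite, unbounded graph, \emph{every} vertex lies on a bi-infinite geodesic (an isometrically embedded copy of $\Z$), obtained by translating geodesic segments $[u_{-n},u_n]$ so their midpoints sit at a fixed vertex and extracting a diagonal subsequence. Given such a geodesic $(u_k)_{k\in\Z}$ through a point $x=u_0$ of a bounded component of $X_G\smallsetminus B(r)$, neither half-ray can stay in that component, so both must enter $B(r)$, say first at $u_n$ and $u_{-m}$ with $n,m>0$. Because the path is a single geodesic, $d(u_n,u_{-m})=n+m$, while $u_n,u_{-m}\in B(r)$ forces $n+m\leq \mathrm{diam}\,B(r)\leq 2r$; hence $d(x,B(r))\leq\min(m,n)\leq r$ and $x\in B(2r)$. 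This two-sided pigeonhole along one geodesic is precisely what your one-sided reachability estimate cannot produce: reachability only says the component of $x$ is large, never that $x$ must be shallow. (Your translation instinct mirrors the paper's ``alternative proof,'' which runs into the same structural problem: a translate $aB(r)$ disjoint from $B(r)$ is connected, hence meets only one component of $X_G\smallsetminus B(r)$, so it can never link a bounded component to the unbounded one.)
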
 

We postpone the proof of this Proposition to the end of this section. 

\subsection{End of the proof of Theorem \ref{amalgam}} 
Consider the amalgamated  
product $G = G_1 \ast _H G_2$. Let $X_1$ and $X_2$ be the  
standard $2$-complexes associated to some finite  
presentations of $G_1$ and $G_2$, respectively. 
Let $S_H$ be a finite set of generators of $H$ which are represented  
by a wedge of loops $Y$ in both $X_1$ and $X_2$.   
The space $X$ obtained by attaching 
$X_1$ and $X_2$ along $Y$ has fundamental group $G$. 
Let $C_H$ be the Cayley graph of $H$ corresponding to the generators $S_H$.  
The image of $\widetilde Y$ in $\widetilde X_i$ is then  
homeomorphic to $C_H$.   
 Furthermore the universal covering $\widetilde{X}$ is constructed from coset  
copies of the universal coverings $\widetilde X_1$ and $\widetilde X_2$  
which are attached along copies of $C_H$. 
 
We consider a metric ball $B(r)$ of radius $r$ in $\widetilde X$   
centered at a fixed point. By compactness, $B(r)$ intersects  
only finitely many copies of $\widetilde X_1$ and $\widetilde 
X_2$. Since $\widetilde X_1$ and $\widetilde X_2$  
have linear sci there exists a constant $c$ such that any  
loop lying in one copy of either $\widetilde X_1$ or $\widetilde X_2$   
which is outside $B(cr)$ is contractible by a nullhomotopy outside $B(r)$.

Since the one-ended group $H$ has linear end-depth by Proposition \ref{enddepth},  
one can find a constant $c_1$ such that any two points 
of a copy of $C_H$ lying outside $B(c_1r)$ can be connected by a path  
within that copy $C_H$ not intersecting $B(cr)$.

The proof that any loop of $\widetilde X$ which lies  outside  
$B(c_1r)$ bounds a disk outside $B(r)$ is now standard following 
\cite{Ja2}. Any edge loop $L$ starting at $g\in G$ 
can be written as a word $ga_1a_2a_3 \cdots a_n$, with  $a_i \in 
G_1$, when $i$ is odd and  $a_i \in G_2$, when $i$ is even,  
such that the equality  
$a_1a_2 \cdots a_n =1$ holds in $G$.  
The structure theorem for amalgamated products implies that there exists   
some $i$ so that $a_i\in H$ (see 
\cite{LS}). Thus the edge sub-path $l$ corresponding to the  
element $a_i\in H$ starts and ends in the same copy of $C_H$. 
 
We will show that $l$ can be homotoped in $\widetilde X$ rel end points  
into this copy of $C_H$.   As $L$  lies outside $B(c_1r)$,  
the end points of $l$ are outside $B(c_1r)$ and by the above argument  
they can be connected by some path $p$ lying within the same copy of  
$C_H$ and which does not intersect $B(cr)$. 
The resulting loop $l\cup p$ obtained by gluing together $l$ and $p$ at their  
common end points is  therefore contained in one  
copy of  either $\widetilde X_1$ or else of $\widetilde X_2$.   
Moreover, $l\cup p$  lies in the complement of $B(cr)$.  
By hypothesis, $G_i$ have linear sci and thus $l\cup p$  
can be contracted out of $B(r)$. This establishes the claim.  
The word associated to the path $p$ belongs to $H$ and  
it can be absorbed into $a_{i-1}$.  
Thus  we obtain a free homotopy of $L$ outside $B(r)$ to a  
loop $L'$ starting at $g$ which corresponds  
to a word strictly shorter than that of $L$.  
Then, by induction on $n$, we can decrease the length  
$n$ until the resulting loop has $n=1$.  
This proves the first part of Theorem \ref{amalgam}.

In order to prove the second part let us recall the HNN construction.  
If $H$ is a finitely generated subgroup of the finitely presented group  
$G_1$ and $f:H\to G_1$ is a monomorphism from $H$ into $G_1$ 
we set $K=f(H)$. Suppose that $H$ is generated by $a_1, \ldots , a_n$ 
and denote by $c_i$ the generators $f(a_1),f(a_2),\ldots f(a_n)$ of $K$. Let  
$$\langle b_1,\ldots , b_m, a_1, \ldots , a_n, c, \ldots , c_n \ | \ p_1=1, 
\ldots , p_k=1\rangle $$  
be a presentation for $G_1$. Then the  HNN-extension $G=G_1\ast_H$   
of $G_1$ by $f$ has the presentation:  
$$\langle b_1, \ldots , b_m, a_1, \ldots , a_n, c_1, \ldots , 
c_n, t \mid p_i=1\text{ for }i\leq k, c_j=t^{-1} a_j t\text{ for }j\leq n \rangle.$$

Consider the $2$-complex $X_1$ associated to the given presentation  
of $G_1$. It contains two wedges of circles $Y_H,Y_K$ associated to 
finite sets of generators of $H$ and $K$.  
Consider the space $X$ obtained from a copy 
of $X_1$ and a copy of $Y_H\times [0,1]$ where 
$Y_H \times \{0\}$ is identified with the copy of $Y_H$ in $X_1$ and  
$Y_H \times \{1\}$ is identified with the copy of $Y_K$ in  
$X_1$ by means of $f$. The universal covering space $\widetilde X$ of $X$ can be 
 constructed from coset copies of $\widetilde X_1$ and  
$C_H \times [0,1]$, where $C_H$ denotes the Cayley graph of $H$.  
As above $C_H$ is the image of $\widetilde Y_H$ inside $\widetilde X_1$.

As in the case of an amalgamated product above,  
the key tool is Britton's lemma giving the structure of an HNN extension which we state as follows.    
Suppose that we have the equality  
$g_0 t ^{i_1}g_1 t^{i_2}\cdots t^{i_n}g_n=1$ in $G$,  
where $g_k \in G_1$. Then, for some $k$, either $i_k 
>0$, $i_{k+1}<0$, and $g_k$ is in $K$ or else $i_k 
<0$, $i_{k+1}>0$, and $g_k$ is in $H$.  
 
We denote by $B(r)$ the metric ball centered at the identity in $\widetilde X$.  
Since each copy of $\widetilde X_1$ has linear sci  
there is $c$ such that any loop in $\widetilde X_1$ outside  
the metric ball $B(cr)$   
contained in one copy of $\widetilde X_1$ bounds a disk not 
intersecting $B(r)$. The metric ball $B(cr)$ intersects only  
finitely many copies of $\widetilde Y_H \times [0,1]$.  
By Proposition \ref{enddepth}, since $H$ is one-ended, one can choose $c$ large enough such that  any two points of one copy of $\widetilde Y_H \times 
[0,1]$ which lie outside $B(cr)$ can be joined by a path  within this copy,  
not intersecting $B(r)$. 
 
Let $L$ be an edge loop in  $\widetilde X \sm  B(c^2r)$. This loop can   
be represented by a word $g_0 t ^{i_1}g_1 t^{i_2} \cdots t^{i_n}g_n$,  
where $g_j \in G_1$ and which is equal to $1$ in $G$.  
If $\sum_{j=1}^n |i_j|=0$ then the loop  
is contained in one copy of $\widetilde X_1$ and thus is  
contractible out of $B(r)$, by hypothesis.  
When $\sum_{j=1}^n |i_j|>0$, let $k$ be the one provided by   
Britton's lemma in the form stated above. Then the edge path corresponding  
to the word $t^{{\rm sgn}(i_k)}g_kt^{{\rm sgn}(i_{k+1})}$ can be closed in either  
$C_H$ (or $C_K$) by means of a path with the  
same end points which does not intersect  $B(cr)$.  
Here ${\rm sgn(i)}$ denotes the sign of the non-zero $i$.  
We obtain a loop lying in a copy of $\widetilde X_1$   
outside of $B(cr)$ which can  
therefore be contracted outside $B(r)$. Thus the loop $L$ is homotopic 
outside $B(r)$ to a new loop for which the quantity $\sum _{j=1}^n |i_j|$  
dropped-off by two units. The claim follows by induction.

\begin{remark} 
If $G_i$ are one-ended sci and $H$ is finitely generated  
multi-ended then $G_1\ast_H G_2$ is one-ended but {\em not} sci according to Jackson (see \cite{Ja2}).   
\end{remark}

\subsection{Proof of Proposition \ref{enddepth}} 
 The first step is the following lemma: 
 
\begin{lemma}\label{infgeod} 
In a homogeneous locally finite one-ended graph, through any point $p$ 
passes a discrete geodesic, i.e. an isometrically embedded copy of 
the integers. 
\end{lemma}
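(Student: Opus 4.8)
The plan is to produce the bi-infinite geodesic as a limit of longer and longer finite geodesic segments that are \emph{centered} at $p$, using homogeneity to perform the centering and local finiteness to extract the limit. Throughout I read ``homogeneous'' as meaning that the automorphism group acts transitively on vertices (as it does, for instance, on any Cayley graph), since that is the feature the argument needs.

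First I would record that a one-ended, in particular infinite, connected locally finite graph has infinite diameter: every ball $B(p,R)$ contains only finitely many vertices, so no finite radius can exhaust the graph. Hence distances are unbounded, and by connectedness every intermediate distance is realized; in particular, for each $n$ there is a geodesic segment $\sigma_n$ of length $2n$ (e.g. an initial subsegment of a geodesic from $p$ to a vertex at distance $\geq 2n$). Write $m_n=\sigma_n(n)$ for its midpoint vertex, which lies at distance $n$ from each endpoint of $\sigma_n$. Next I would recenter using homogeneity: choose an automorphism $\phi_n$ with $\phi_n(m_n)=p$. Since automorphisms are isometries, $\gamma_n:=\phi_n\circ\sigma_n$ is again a geodesic segment, now passing through $p$ at its midpoint. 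Reparametrizing by arc length over $\{-n,\dots,n\}$ so that $\gamma_n(0)=p$, I obtain $d(\gamma_n(i),\gamma_n(j))=|i-j|$ for all $i,j\in\{-n,\dots,n\}$.

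Finally I would pass to the limit by a K\"onig/diagonal argument. Local finiteness guarantees that only finitely many edges emanate from $p$, and more generally from any vertex reached, so infinitely many of the $\gamma_n$ agree on the values $\gamma_n(\pm 1)$; among those, infinitely many agree on $\gamma_n(\pm 2)$; and so on. A diagonal subsequence then defines a map $\gamma:\Z\to X$ with $\gamma(0)=p$. To see that $\gamma$ is an isometric embedding, fix $i<j$: for all large $n$ in the chosen subsequence $\gamma$ agrees with $\gamma_n$ on $\{i,\dots,j\}$, whence $d(\gamma(i),\gamma(j))=|i-j|$. This $\gamma$ is the desired discrete geodesic through $p$.

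The main obstacle, and the only place where the hypotheses are really used, is guaranteeing that a geodesic segment can be arranged to have $p$ as an \emph{interior}, indeed central, point rather than sitting near an endpoint: in an arbitrary graph one might be stranded at a dead end and unable to extend a geodesic in both directions. Homogeneity removes this difficulty at a stroke, since any vertex can be carried to $p$, while local finiteness is precisely what makes the limit extraction valid. I note that one-endedness enters here only through infiniteness of the graph; the argument otherwise applies to any infinite homogeneous locally finite graph.
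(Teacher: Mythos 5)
Your proposal is correct and follows essentially the same route as the paper: geodesic segments of length $2n$ recentered at a fixed vertex via homogeneity, followed by a diagonal extraction justified by local finiteness. The paper's proof is just a terser version of yours (it cites "a compactness argument (e.g.\ diagonal extraction)" where you spell out the K\"onig-type construction), and it likewise uses one-endedness only to get unboundedness.
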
 
 
\begin{proof} 
Since the graph is unbounded, for any $n\in \mathbb N$ there exist 
two vertices at distance $2n$, joined by a geodesic segment 
$u_n,u_{n-1},...,u_{-n}$. By homogeneity, we can choose as $u_0$ a 
 fixed  base point $u_0=x_0$. Now, this is true for any natural $n \in \mathbb 
N$, and since 
the graph is locally finite, there exists, by a compactness argument 
(e.g. diagonal extraction), the desired geodesic. 
\end{proof} 
 
Now, Proposition \ref{enddepth} follows from the following proposition. 
 
\begin{proposition} 
Let $X$ be a graph as before. Let $r\in \mathbb N$ be a natural 
number and $K$ be a finite subset of $X$ whose diameter is at most 
$2r$. Denote by $C$ a connected component of $X\sm K$. Then for any 
 point $x$ in $C$, we have the following alternative: 
 \begin{itemize} 
 \item either $x$ belongs 
to a geodesic ray (i.e. an embedded copy of the natural numbers) 
of $X$ within $C$ (and this in particular implies that $C$ is 
infinite), 
\item or else the distance from $x$ to $K$ is at most $r$ and  
 $C$ is bounded. 
\end{itemize} 
\end{proposition}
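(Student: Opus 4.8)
The plan is to feed a single point $x\in C$ into Lemma~\ref{infgeod} and then read off the alternative from the way the resulting bi-infinite geodesic meets the finite set $K$. First I would invoke Lemma~\ref{infgeod} to produce an isometrically embedded geodesic line $\sigma:\mathbb Z\to X$ with $\sigma(0)=x$; since $\sigma$ is an isometric embedding, $d(\sigma(i),\sigma(j))=|i-j|$ for all $i,j$. Write $R_+=\sigma|_{[0,\infty)}$ and $R_-=\sigma|_{(-\infty,0]}$ for its two geodesic rays, each an embedded copy of $\mathbb N$ based at $x$.

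The dichotomy then comes from asking whether one of these rays avoids $K$. If, say, $R_+$ never meets $K$, then its image is a connected subset of $X\sm K$ containing $x$, hence is entirely contained in the component $C$; thus $x$ lies on a geodesic ray of $X$ inside $C$, which is the first alternative and forces $C$ to be infinite. The same reasoning applies to $R_-$. So it remains to treat the case in which both rays meet $K$: I would choose $i_+>0$ and $i_-<0$ with $\sigma(i_+),\sigma(i_-)\in K$. Because $K$ has diameter at most $2r$ and $\sigma$ is isometric, $i_++|i_-|=d(\sigma(i_+),\sigma(i_-))\le 2r$, so $\min(i_+,|i_-|)\le r$; as $d(x,\sigma(i_+))=i_+$ and $d(x,\sigma(i_-))=|i_-|$, this yields $d(x,K)\le r$, the second alternative.

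Finally I would deduce the boundedness statement globally. Since $K$ is finite and $X$ is locally finite, the $r$-neighbourhood $\{y: d(y,K)\le r\}$ is a finite set. If no point of $C$ lies on a geodesic ray contained in $C$, then by the pointwise dichotomy every point of $C$ satisfies $d(x,K)\le r$, so $C$ is contained in this finite neighbourhood and is therefore bounded; conversely, as soon as some point of $C$ falls into the first alternative, $C$ contains a ray and is unbounded. In other words the two bullets correspond exactly to $C$ being infinite (a ray survives inside $C$) versus $C$ being bounded (all points stay within $r$ of $K$).

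The step I expect to be the main obstacle is twofold. First, extracting the sharp constant $r$ rather than $2r$ hinges on the observation that the two hitting parameters $i_+$ and $|i_-|$ have sum at most $\operatorname{diam}(K)\le 2r$, so their minimum is at most $r$. Second, one must phrase the alternative carefully, since $d(x,K)\le r$ is a property of the individual point $x$ whereas boundedness is a property of $C$; the bridge between the two is precisely that $C$ is unbounded if and only if it contains some point farther than $r$ from $K$, and by the pointwise dichotomy any such point must then lie on a ray inside $C$.
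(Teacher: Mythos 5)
Your proof is correct and follows essentially the same route as the paper: invoke Lemma~\ref{infgeod} to get a bi-infinite geodesic through $x$, observe that a half-ray missing $K$ is a connected set containing $x$ and hence a geodesic ray inside $C$ (first alternative), and otherwise use ${\rm diam}(K)\leq 2r$ together with the isometric parametrization to get $d(x,K)\leq \min(i_+,|i_-|)\leq \frac{i_++|i_-|}{2}\leq r$. If anything, you are a bit more careful than the paper, whose proof stops at $d(x,K)\leq r$ and leaves implicit the bridge from the pointwise estimate to the boundedness of $C$ (via finiteness of the $r$-neighbourhood of $K$, by local finiteness of the graph).
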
 
\begin{proof} 
Let $x$ be a point of $C$. Then, by Lemma  \ref{infgeod}, there exists 
a discrete geodesic $(u_n)$ with $n\in \mathbb Z$ such that 
$u_0=x$. If $x$ does not belong to any geodesic ray contained in 
$C$, then one can find  $n$ and $m>0$ (both minimal)  such that 
$u_n$ and $u_{-m}$ belong to $K$. Since the diameter of $K$ is by 
hypothesis $\leq 2r$, then one has $m+n \leq 2r$. This means that 
the distance $d(x,K)$ from $x$ to $K$ is $\min \{ m, n \}\leq \frac 
{m+n}{2} $. Hence $x$ is within $r$ from $K$, and this ends the 
proof of the proposition. 
\end{proof}

\begin{proof}[End of proof of Proposition \ref{enddepth}] 
Whenever $K$ is a ball $B(r)$ of radius $r$ centered at the 
neutral element of the Cayley graph of the group $G$, then 
the previous proposition implies that any bounded connected 
component of $X_G\sm B(r)$ is included in the ball $B(2r)$ having the 
same center and radius $2r$. In particular one has that $V_0 (r) 
\leq 2r$. 
\end{proof} 
 
\begin{proof}[An alternative proof of Proposition \ref{enddepth}] 
Suppose that there is a positive integer $r\geqslant 2$, such that  
$V_0(r)>2r$. Then, there is a bounded connected component $A$   
of $X_G\smallsetminus B(r)$ and $a\in A$ such that $d(a,B(r))>r$.  
As $G$ is one-ended, there is an unbounded connected  
component  $C$ of $X_G\smallsetminus B(r)$. Consider the action of  
$a$ on $X_G$ by multiplication. Since $d(a,B(r))>r$, clearly  
$aB(r)=B(a,r)\subset A$ and there are $x\in A$, $y\in C$ so that  
$ax,ay\in B(r)$. Here $B(a,r)$ denotes the metric ball of radius $r$ centered at 
$a$.  
Therefore, there is a path $\gamma$ in $B(r)$ that  
joins $ax$ to $ay$. Then $a^{-1}\gamma$ is a path that joins an element  
of $A$ to an element of $C$, so it must pass through $B(r)$. Thus,  
there is $w$ on $\gamma$ so that $a^{-1}w\in B(r)$. This however  
implies that $w\in B(r)\cap a B(r)$ which is a contradiction. 
This proves that $V_0(r)\leqslant 2r$ and hence the end-depth of $G$ is linear. 
\end{proof}

\section{Proof of Theorem \ref{hrank}} 
 
Let $G$ be a connected, semisimple Lie group with trivial center 
and without compact factors. Unlike uniform lattices, non-uniform 
lattices $\Gamma$ in $G$ are not quasi-isometric to the symmetric 
space $X= G/K$ since they do not act cocompactly on $X$.  
But one can consider the following construction:  
chop off every cusp of the quotient 
$X/\Gamma$ and look at the lifts of each cusp to $X$, giving a 
$\Gamma$-equivariant union of horoballs in $X$. These horoballs 
are not disjoint in general; they can be made disjoint by cutting 
the cusps far enough out precisely when $\Gamma$ has $\mathbb 
Q$-rank one. The 
resulting space is called the {\em neutered space} $X_0$ 
associated to $\Gamma$, and $\Gamma$ acts cocompactly on it. 
The natural metric on $X_0$ is the path metric induced  
from $X$, given by the infimal length in $X$ of  
paths contained in $X_0$ that join the two points. 
Then $\Gamma$ endowed with the word metric is quasi-isometric  
to $X_0$ endowed with the path metric.   
However, sometimes the path metric on $X_0$ might be distorted  
with respect to the original metric on $X$. In order to circumvent this  
difficulty we consider first only higher rank groups.

\begin{proof}[Proof of Theorem \ref{hrank}] 
Since $G$ has higher rank, a result due to Lubotzky, Mozes, and Raghunathan  
(see \cite{LMR}) states that  
the embedding of $\Gamma$ endowed with the word metric into $G$ endowed with  
a left invariant metric is Lipschitz and hence a quasi-isometric embedding.  
The projection $G\to G/K$ is a quasi-isometry and hence  
$\Gamma$ is quasi-isometric to an orbit $\Gamma\cdot x_0\subset X$  
endowed with the restriction of the Riemannian metric $d_X$ on $X$.  
Finally the embedding of an orbit of $\Gamma$ into the neutered space $X_0$  
is a quasi-isometry when we consider the metric $d_X|_{X_0}$ on $X_0$.

By the quasi-isometry invariance of the sci growth,  
it will be sufficient to prove that $X_0$ endowed with the metric $d_X|_{X_0}$ 
has a linear $V_{X_0}$. The metric balls $B_{(X_0,d_X|_{X_0})}(x_0,r)$   
of radius $r$ centered at $x_0\in X_0$  for this non-geodesic metric  
are easy to describe, namely:  
\[ B_{(X_0,d_X|_{X_0})}(x_0,r)=B_{(X,d_X)}(x_0,r) \cap X_0\]  
in terms of the Riemannian metric balls $B_{(X,d_X)}(x_0,r)$.  
 
Now, the neutered space $X_0$ is obtained from $X$ by removing  
a collection of disjoint horoballs, as the $\mathbb Q$-rank of $\Gamma$  
is at least $2$.  Then any ball  $B_{(X,d_X)}(x_0,r)$ of $X$   
intersects only finitely many such horoballs. 
 
This implies that  the metric sphere $S_{(X_0,d_X|_{X_0})}(x_0,r)\subset  
\partial B_{(X_0,d_X|_{X_0})}(x_0,r)$  
is obtained from the usual metric sphere $\partial B_{(X,d_X)}(x_0,r)$ in $X$  
by removing from it the intersection with a disjoint  
union of finitely many horoballs.   
 
We need now a lemma which explains the geometry of such intersections:

\begin{lemma}\label{cat} 
Let $X$ be a proper CAT(0) manifold,   
$H$ be a horoball, and $B$ be a 
sphere of $X$. If the center $c$ of $B$ does not  
belong  to $H$, then $B\cap H$ is convex (i.e. topologically a ball). 
\end{lemma}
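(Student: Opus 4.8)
The plan is to reduce the statement to a Morse-theoretic fact about the Busemann function restricted to the sphere, after first recording the underlying convexity. Write $H=\{b_\xi\le t\}$, where $b_\xi$ is the Busemann function of the boundary point $\xi\in\partial X$ determining the horoball; recall $b_\xi$ is convex with $\|\nabla b_\xi\|=1$. Both the metric ball $\bar B(c,r)$ and the horoball $H$ are convex, so their intersection $K=\bar B(c,r)\cap H$ is a compact convex body. Since $X$ is a Hadamard manifold (the case of interest, $X=G/K$ being CAT(0) and smooth), geodesic polar coordinates centred at $c$ identify $X\setminus\{c\}$ with $S^{n-1}\times(0,\infty)$, and radial projection from an interior point identifies $K$ with a topological $n$-ball; this already accounts for the parenthetical claim. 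The real content, used in the application, is that the trace on the sphere $\partial B(c,r)\cap H$ is itself a topological disk.

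Under polar coordinates $u\mapsto\gamma_u(r)$ the sphere $\partial B(c,r)$ is identified with $S^{n-1}$, and $\partial B(c,r)\cap H$ becomes the sublevel set $U=\{u\in S^{n-1}:\beta(u)\le t\}$ of the function $\beta(u)=b_\xi(\gamma_u(r))$. The heart of the argument is to determine the critical points of $\beta$. A direction $u$ is critical exactly when $\nabla b_\xi$ is normal to the sphere at $\gamma_u(r)$, i.e. parallel to the radial field $\dot\gamma_u(r)$. Since $\nabla b_\xi$ is the unit tangent to the geodesic through $\gamma_u(r)$ asymptotic to $\xi$, this forces the ray $\gamma_u$ to coincide, as a geodesic line, with the $\xi$-geodesic through $\gamma_u(r)$, and hence forces $c$ to lie on that $\xi$-geodesic. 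Therefore the only critical points of $\beta$ are the two directions along the geodesic joining $c$ to $\xi$: a global minimum $u^\ast$ pointing toward $\xi$, with $\beta(u^\ast)=b_\xi(c)-r$, and a global maximum pointing away, with value $b_\xi(c)+r$.

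Having isolated exactly two critical points, one minimum and one maximum, Morse--Reeb theory shows that every sublevel set $\{\beta\le\tau\}$ with $\min\beta<\tau<\max\beta$ is a closed $(n-1)$-disk bounded by the regular level $\{\beta=\tau\}\cong S^{n-2}$. Here the hypothesis $c\notin H$ is used decisively: it gives $b_\xi(c)>t$, whence $t<b_\xi(c)+r=\max\beta$, so $U$ is a proper sublevel set and never all of $S^{n-1}$. When moreover $r>d(c,H)=b_\xi(c)-t$ it is a genuine disk; for $r=d(c,H)$ it degenerates to the single point $\gamma_{u^\ast}(r)$, and for $r<d(c,H)$ it is empty. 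This yields that $\partial B(c,r)\cap H$ is topologically a ball.

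The hard part, and the reason one cannot argue more cheaply, is that the obvious projections fail: a geodesic ray from $c$ may enter and then leave $H$, and a $\xi$-geodesic can meet the sphere twice inside $H$, so neither radial projection from $c$ nor the product structure of $H$ is injective, and $U$ is not star-shaped in any naive sense. The Morse viewpoint is exactly what correctly glues these two ``sheets'' into a single disk along the regular equator $\{\beta=t\}$. The remaining subtlety is that for a general proper CAT(0) manifold $b_\xi$ need not be smooth; there I would replace the Morse argument by the gradient flow of the convex function $b_\xi$, or by an exhaustion through the concentric convex horoballs $\{b_\xi\le\tau\}$, the essential point being unchanged, that the only radial critical directions are the two along the $c$--$\xi$ geodesic. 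For the lattices at hand $X=G/K$ is smooth, so the Hadamard argument suffices.
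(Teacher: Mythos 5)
Your proof is correct in substance, but it is the dual of the paper's argument rather than the same one. The paper runs critical point theory on the horoball instead of on the sphere: it restricts the distance function $f_c=d(\cdot,c)$ to $H$ and observes that, since $c\notin H$, the only critical point of $f_c|_H$ is the nearest-point projection $p(c)\in\partial H$ (an interior critical point would need $\nabla f_c=0$, and a boundary critical point needs $\nabla f_c$ parallel to $\nabla b_\xi$, which---exactly as in your analysis---confines it to the geodesic from $c$ toward $\xi$); this point is a non-degenerate minimum, and properness of $f_c|_H$ plus the gradient flow show that every level set $\{d(\cdot,c)=r\}\cap H=B\cap H$ is an $(n-1)$-disk. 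So both proofs rest on the same geometric fact, that $\nabla d(\cdot,c)$ and $\nabla b_\xi$ are parallel only along the $c$--$\xi$ geodesic, but they slice the intersection in opposite ways, and the trade-offs differ. The paper's function has a \emph{single} critical point, whose non-degeneracy is automatic in nonpositive curvature ($\mathrm{Hess}\,d(\cdot,c)$ is positive definite transverse to the gradient, and the convex horosphere contributes a nonnegative term), at the price of doing Morse theory on a noncompact manifold with boundary. Your version lives on the compact sphere, needs no properness, and makes the role of the hypothesis $c\notin H$ completely transparent ($t<\max\beta$); but it introduces a second critical point whose non-degeneracy is \emph{not} automatic: at the maximum the sphere is inscribed in the horoball $\{b_\xi\leq b_\xi(c)+r\}$ and tangency only gives $\mathrm{II}_{\mathrm{sphere}}\geq \mathrm{Hess}\,b_\xi$ non-strictly, so invoking ``Morse--Reeb theory'' for a possibly degenerate maximum is a real (though repairable) gloss---the two-critical-point theorem without non-degeneracy is genuinely harder, essentially requiring the topological Poincar\'e conjecture. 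The cheap repair is worth stating: the only critical value in $[\min\beta,\,t]$ is $\min\beta$ itself, and the \emph{minimum} is non-degenerate because its second variation is $\mathrm{Hess}\,b_\xi+\mathrm{II}_{\mathrm{sphere}}\geq 1/r>0$; hence the gradient flow carries $\{\beta\leq t\}$ onto a small Morse-lemma disk about $u^\ast$, and the maximum is never needed except to guarantee $t<\max\beta$. Finally, the regularity caveat you flag (Busemann functions on a general CAT(0), even Hadamard, manifold are only $C^{1,1}$) applies equally to the paper's own proof, which is silent about it; both arguments are fully rigorous only in the smooth cases (symmetric spaces, truncated hyperbolic space) actually used in Theorem \ref{hrank}.
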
 
\begin{proof} 
Let $f_c(x)=d(x,c)$ be the distance function to a fixed point $c\not\in H$.   
Then $f_c$ restricted to $H$ has only a critical point 
in $H$, namely the projection $p(c)$ of $c$ on $H$,  
where it achieves a non-degenerate minimum. 
Since $f_c$ is proper, the level sets on $H$   
retract onto $p(c)$. 
\end{proof} 
 
From Lemma \ref{cat} we derive that the metric spheres in $(X_0, d_X|_{X_0})$ are 
obtained from $S^{n-1}$ by removing finitely many  
disjoint disks $D^{n-1}$. 
This means that, whenever the dimension $n$ of $X$  
is $n\geq 4$, the metric spheres in $X_0$ are simply connected.  
It follows that $V_{(X_0, d_X|_{X_0})}(r)=r$  is linear and hence $\Gamma$  
has linear sci.

For the second part of the Theorem \ref{hrank} consider  
$\Gamma$ a non-uniform lattice in $SO(n,1)$.  
A non-uniform lattice $\Gamma$  
acts properly and cocompactly by 
isometries on $X_0=\mathbb H^n \sm  \mathcal F$ where $\mathcal F$ is a  
finite union of disjoint open horoballs.  
The result does not follow from Lemma \ref{cat}, as the metric  
on this truncated hyperbolic metric space is the path metric, which is  
exponentially distorted.    
Nevertheless this space is CAT(0) (by \cite{BrHa},  
Cor. 11.28 p.362 and \cite{Ru}).   
Metric balls are therefore homeomorphic to balls and their boundaries are  
spheres.   
In order to understand the topology of the metric spheres it suffices  
to consider a neighborhood of one horoball $H$. Given $c\in X_0$  
consider the cone in $\mathbb H^n$  
with vertex $c$ which is tangent to the horoball $H$ along an equidistant   
$(n-1)$-sphere $S^{n-1}(c)\subset \partial H$. If $p$  
belongs to the  visible $n$-disk bounded by $S^{n-1}$ on $\partial H$  
the geodesics segments joining $p$ and $c$ for the hyperbolic  
metric $d_{\mathbb H^n}$ and the path metric on $X_0$ coincide.    
When $p\in\partial H$ is outside the visible disk a geodesic  
segment in the path metric consists of a spherical segment $pq$ joining  
$p$ to $q\in S^{n-1}(c)$ followed by a geodesic segment $qc$.  
It follows that metric spheres in the path metric are obtained from  
a sphere by deleting a number of disjoint disks corresponding to  
visible disks at distance smaller than the radius.  
For $n\geq 4$ these are simply connected and this shows that  
$X_0$ with its path metric has linear sci.  
\end{proof}

\section{Other classes of groups with linear sci} 
Recall that a {\em Coxeter group} is a group $W$ with presentation of the 
following form:  
 
$$ \langle s_1, s_2 , \ldots , s_n | s_i^2 =1 
\mbox{ for } i \in \{1,2, \ldots ,n\}, (s_is_j)^{m_{ij}}=1\rangle $$  
where 
$i<j$ ranges over some subset of $\{1, 2, \ldots , n\}\times \{1, 
2, \ldots , n \}$ and $m_{ij}\geq 2$. Let $W$ be a Coxeter 
group. 
 
\begin{proposition}\label{Coxeter} 
Coxeter groups which are sci have linear sci.  
\end{proposition}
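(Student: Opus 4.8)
The plan is to induct on the number $n=|S|$ of Coxeter generators, using Theorem \ref{hyperbolicsci} as the geometric base case and Theorem \ref{amalgam} as the inductive gluing step. First I would record that a sci group is necessarily one-ended, since a group with more than one end cannot be simply connected at infinity in the non-degenerate sense; in particular the Coxeter diagram of $W$ is connected and $W$ does not split over any finite (spherical) special subgroup, as such a visual splitting would force infinitely many ends. This restricts attention to infinite, one-ended systems $(W,S)$, and lets the induction proceed from a small list of base cases together with the visual graph-of-groups structure of $W$.

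For the base cases I would separate according to Moussong's hyperbolicity criterion. If $W$ is word hyperbolic then, being sci by hypothesis, Theorem \ref{hyperbolicsci} immediately yields linear sci. Otherwise $W$ contains a Euclidean reflection subgroup, that is, an affine special subgroup $W_T$ of rank at least three, or a direct product $W_{T_1}\times W_{T_2}$ of two infinite special subgroups. The genuinely new base case is an \emph{irreducible affine} group, which is virtually $\Z^k$; provided it is sci --- which forces $k\geq 3$, since $\Z^2$ is not sci --- it has linear sci, because the sci growth of $\mathbb R^k$ is linear for $k\geq 3$ and the sci growth is a quasi-isometry invariant.

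The inductive step is to peel off a visual splitting. Writing $S=S_1\cup S_2$ with $S_0=S_1\cap S_2$ and no finite label $m_{st}$ between $S_1\sm S_0$ and $S_2\sm S_0$ exhibits $W$ as an amalgam $W=W_{S_1}\ast_{W_{S_0}}W_{S_2}$ over the special subgroup $W_{S_0}$, with the analogous HNN statement. Since $W$ is one-ended, any such $W_{S_0}$ is infinite; the crucial point I would establish is that the decomposition can be arranged with $W_{S_0}$ \emph{one-ended}, so that Theorem \ref{amalgam} applies. The factors $W_{S_1},W_{S_2}$ use strictly fewer generators, and one checks that they remain one-ended and sci, so that the induction hypothesis supplies linear sci for them and Theorem \ref{amalgam} propagates it to $W$. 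The reducible case $W=W_{T_1}\times W_{T_2}$ is treated in the same spirit: when a factor is two-ended, say $W_{T_1}\cong\mathbb Z/2\ast\mathbb Z/2$, one writes $W=(\mathbb Z/2\times W_{T_2})\ast_{W_{T_2}}(\mathbb Z/2\times W_{T_2})$, an amalgam over the one-ended subgroup $W_{T_2}$ with factors of smaller generator count.

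The main obstacle I anticipate is precisely the control of the number of ends of the amalgamating special subgroup. Theorem \ref{amalgam} requires $W_{S_0}$ to be one-ended, and the Remark following it --- resting on Jackson's examples \cite{Ja2} --- shows that sci genuinely fails for amalgams over multi-ended subgroups, so a splitting over a \emph{two-ended} special subgroup cannot be fed into the induction. A second, related subtlety is that in the reducible case a factor may fail to be sci even though the product is (for instance $D_\infty\times\Z^2$ is virtually $\Z^3$, hence sci, while the second factor is not), so the product reductions must be merged with the Euclidean base case rather than applied blindly. The combinatorial heart of the argument is therefore to show that a one-ended, non-hyperbolic, irreducible sci Coxeter group is either virtually abelian --- handled by the $\mathbb R^k$ computation --- or admits a visual splitting over a one-ended special subgroup whose factors are again one-ended and sci, so that the induction is well founded.
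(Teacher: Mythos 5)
There is a genuine gap, and it is the one you yourself flag as ``the combinatorial heart'': the claim that every one-ended, non-hyperbolic, irreducible sci Coxeter group is either virtually abelian or admits a visual splitting over a \emph{one-ended} special subgroup whose factors are again one-ended and sci is never proved, and it is not a routine check but a substantial structure theorem about Coxeter groups. A Coxeter group admits a visual splitting only when some subset of generators disconnects the defining graph (the graph with an edge for each pair with $m_{st}<\infty$); groups with complete defining graph admit no visual splitting whatsoever. Such groups exist in abundance outside your two base cases: for instance the Coxeter group on four generators with all $m_{ij}=3$ (the reflection group of the regular ideal tetrahedron in $\mathbb{H}^3$) is one-ended, irreducible, not word hyperbolic and not virtually abelian, and splits visually over nothing. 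It happens not to be sci --- but to see that \emph{no} such unsplittable group can be sci unless virtually abelian, one needs precisely the characterization of Davis and Meier \cite{DM} (sci $\Leftrightarrow$ the nerve $L$ and all punctured links $L-\sigma$ are simply connected), which your outline never invokes. Without that input your induction has no reason to be well founded.

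A second unproved step sits inside the induction even when a visual splitting $W=W_{S_1}\ast_{W_{S_0}}W_{S_2}$ over a one-ended $W_{S_0}$ does exist: you need the factors $W_{S_i}$ to be sci in order to apply the inductive hypothesis, but Theorem \ref{amalgam} only transfers (linear) sci \emph{from} the factors \emph{to} the amalgam, never the reverse. Sci does not obviously pass to factors; for Coxeter groups the nerve of $W_{S_1}$ is a full subcomplex of the nerve of $W$, and simple connectivity does not restrict to full subcomplexes, so ``one checks that they remain one-ended and sci'' is an assertion, not an argument. (A smaller issue: your opening claim that sci forces one-endedness is unjustified under the paper's definition, by which trees, and hence virtually free groups, are sci.) By contrast, the paper's proof needs no induction and no splitting theory at all: by \cite{DM} the nerve and all punctured links are simply connected; the boundary of a metric ball in the Davis complex $D_W$ is a connected sum of punctured links, hence simply connected; so any loop outside $B(r)$ contracts onto $\partial B(r)$ and then to a point there, giving $V_{D_W}(r)=r$, and one concludes by passing to a torsion-free finite-index subgroup and quasi-isometry invariance. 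Your route, if the missing structure theorem were supplied, would have the merit of reproving the result from the general amalgamation machinery of Theorems \ref{hyperbolicsci} and \ref{amalgam}; but as it stands the decisive steps are conjectures.
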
 

\begin{proof} 
The Davis complex (see \cite{Da1}) $D_W$ of a   
finitely-generated Coxeter group $W$ is a CAT(0)  
cell complex $D_W$ on which  
$W$ acts on cellularly, properly, and with finite quotient.   
The links of vertices  of $D_W$ are all isomorphic to a fixed finite 
simplicial complex $L$, where $L$ can be described combinatorially  
in term of subsets of the generating set of $W$.  
 
It has been proved in \cite{DM} that a Coxeter group is sci if and only 
if its {\em nerve} $L$ and all its {\em punctured links} 
$L-\sigma$ are simply connected (where $\sigma$ is any simplex of $L$). 
The boundary of a metric ball in $D_W$ is a connected sum of 
various punctured links $L-\sigma$, and hence it is simply 
connected.  
 
Now any loop outside the metric ball  
of radius $r$ can be contracted onto the boundary of the  
metric ball and there contracted to a point.  
This implies that $V_{D_W}(r)=r$.   
 
The action of the Coxeter group on the 
Davis complex is not free but has finite stabilizers. Moreover 
there exists a finite index subgroup which acts freely on the 
Davis complex. This finite index subgroup is still sci and  
quasi-isometric to $D_W$ and hence by the previous 
arguments it has linear sci. This implies that $W$ has linear sci. 
\end{proof}

\begin{remark} 
The same proof shows that a sci right-angled Artin group has linear sci. 
More generally, Artin groups are semistable and have linear semistability 
(see \cite{Mi3}). 
\end{remark} 
 
\begin{remark} 
The connectivity of the punctured links determines the 
connectivity at infinity of $W$. However in  
\cite{DM2} the authors constructed a CAT(0) cell complex  
acted properly and cocompactly by $W$ whose nerve and  
punctured links are {\em not} simply connected, though as $W$ is sci.  
Thus the linear sci is the geometric property of groups  
which is closest to the ``simple connectivity of large spheres".   
\end{remark} 
 
\begin{remark} 
If $1 \to H \to G \to K \to 1$ is an exact sequence of finitely presented infinite groups 
where either $H$ or $K$ has one end then $G$ has a linear sci growth, from \cite{Ja2}. 
\end{remark}

\begin{remark} 
Mihalik and Tschantz have proved (see \cite{MiTsch,MiTsch3})  
that amalgamated products and HNN extensions of semistable groups over  
arbitrary finitely generated subgroups are semistable.    
We don't know whether Theorem \ref{amalgam} can be extended  
to multi-ended subgroups and linear semistability.    
\end{remark}

\end{document}